\newcommand{\mmp}{\mathbb{P}}
\newcommand{\od}{\overset{d}{=}}
\newcommand{\dod}{\overset{d}{\to}}
\newcommand{\tp}{\overset{P}{\to}}
\newcommand{\me}{\mathbb{E}}
\newcommand{\mr}{\mathbb{R}}
\newcommand{\mn}{\mathbb{N}}
\newcommand{\lin}{\underset{n\to\infty}{\lim}}
\newcommand{\lix}{\underset{x\to\infty}{\lim}}
\newcommand{\lit}{\underset{t\to\infty}{\lim}}
\newtheorem{thm}{Theorem}[section]
\newtheorem{lemma}[thm]{Lemma}
\theoremstyle{definition}
\theoremstyle{remark}
\newtheorem{rem}[thm]{Remark}
\begin{document}
\title{On the number of empty boxes in the Bernoulli sieve II}
%\date{\today}
%
\author{Alexander Iksanov\footnote{ Faculty of Cybernetics, National T.
Shevchenko University of Kiev, 01033 Kiev, Ukraine,\newline
e-mail: iksan@univ.kiev.ua}}
\maketitle
\begin{abstract}
\noindent  The Bernoulli sieve is the infinite ``balls-in-boxes"
occupancy scheme with random frequencies $P_k=W_1\cdots
W_{k-1}(1-W_k)$, where $(W_k)_{k\in\mn}$ are independent copies of
a random variable $W$ taking values in $(0,1)$. Assuming that the
number of balls equals $n$, let $L_n$ denote the number of empty
boxes within the occupancy range. In the paper we investigate
convergence in distribution of $L_n$ in the two cases which
remained open after the previous studies. In particular, provided
that $\me |\log W|=\me |\log (1-W)|=\infty$ and that the law of
$W$ assigns comparable masses to the neighborhoods of $0$ and $1$,
it is shown that $L_n$ weakly converges to a geometric law. This
result is derived as a corollary to a more general assertion
concerning the number of zero decrements of nonincreasing Markov
chains. In the case that $\me |\log W|<\infty$ and $\me |\log
(1-W)|=\infty$ we derive several further possible modes of
convergence in distribution of $L_n$. It turns out that the class
of possible limiting laws for $L_n$, properly normalized and
centered, includes normal laws and spectrally negative stable laws
with finite mean. While investigating the second problem we
develop some general results concerning the weak convergence of
renewal shot-noise processes. This allows us to answer a question
asked in \cite{Res}.

\end{abstract}
\noindent Keywords: Bernoulli sieve, continuous mapping theorem,
convergence in distribution, depoissonization, infinite occupancy
scheme, renewal shot-noise process

\section{Introduction}

Let $(T_k)_{k\in\mn_0}$ be a multiplicative random walk defined by
$$T_0:=1, \ \ T_k:=\prod_{i=1}^k W_i, \ \ k\in\mn,$$ where $(W_k)_{k\in\mn}$ are independent copies of a random
variable $W$ taking values in $(0,1)$. Let $(U_k)_{k\in\mn}$ be
independent random variables with the uniform $[0,1]$ law which
are independent of the multiplicative random walk. The {\it
Bernoulli sieve} is a random occupancy scheme in which `balls'
$U_k$'s are allocated over infinitely many `boxes' $(T_k,
T_{k-1}]$, $k\in\mn$. The scheme was introduced in \cite{Gne}.
Further investigations were made in \cite{slow, GneIksMar, GIM2,
GINR, GIR, Iks}. Since a particular ball falls in box $(T_k,
T_{k-1}]$ with probability
\begin{equation*}
P_k:=T_{k-1}-T_k=W_1W_2\cdots W_{k-1}(1-W_k),
\end{equation*}
the Bernoulli sieve is also the classical infinite occupancy
scheme \cite{GnePitHan, Karlin} with random frequencies
$(P_k)_{k\in\mn}$, where (abstract) balls are allocated over an
infinite array of (abstract) boxes $1,2,\ldots$ independently
conditionally given $(P_k)$ with probability $P_j$ of hitting box
$j$. Alternatively the Bernoulli sieve can be thought of as a
randomized variant of the leader election procedure which appears
if the law of $W$ is degenerate at some $x\in (0,1)$ (this may be
especially appropriate for the reader familiar with the analysis
of algorithms).

We will use the following notation for the moments
$$\mu:=\me |\log W| \ \ \text{and} \ \ \nu:=\me |\log(1-W)|$$
which may be finite or infinite. Assuming that the number of balls
equals $n$ denote by $K_n$ the number of occupied boxes, $M_n$ the
index of the last occupied box, and $L_n:=M_n-K_n$ the number of
empty boxes within the occupancy range. The present paper is a
contribution towards understanding the weak convergence of $L_n$.
With the account of the results obtained here and in some previous
works on the subject we can now draw an almost complete picture
(Remark \ref{en1} which discusses two cases where the weak
convergence of $L_n$ remains unsettled reveals what is hidden
behind the word 'almost'). Depending on the behavior of the law of
$W$ near the endpoints $0$ and $1$ the number of empty boxes can
exhibit quite a wide range of different asymptotics.

\noindent {\sc Case $\mu<\infty$ and $\nu<\infty$}: $L_n$
converges in distribution and in mean to some $L$ with proper and
nondegenerate law (Theorem 2.2(a) in \cite{GINR} and Theorem 3.3
in \cite{GIR}). Furthermore there is also convergence of all
moments (Theorem 20(b) in \cite{Negad}).

\noindent {\sc Case $\mu=\infty$ and $\nu<\infty$}: $L_n$
converges to zero in probability (Theorem 2.2(a) in \cite{GINR}).

\noindent {\sc Case $\mu<\infty$ and $\nu=\infty$}: There are
several possible modes of the weak convergence of $L_n$, properly
normalized and centered (see Theorem \ref{main7} of the present
paper).

\noindent {\sc Case $\mu=\infty$ and $\nu=\infty$}: The
asymptotics of $L_n$ is determined by the behavior of the ratio
$\mmp\{W\leq x \}/\mmp\{1-W\leq x\}$, as $x\downarrow 0$. When the
law of $W$ assigns much more mass to the neighborhood of $1$ than
to that of $0$ equivalently the ratio goes to $0$, $L_n$ becomes
asymptotically large. In this situation the weak convergence
result for $L_n$, properly normalized without centering, was
obtained in \cite{Iks} under a condition of regular variation. If
the roles of $0$ and $1$ are interchanged $L_n$ converges to zero
in probability (this follows from Theorem 7.1(i) in \cite{GIM2}
and Markov inequality). When the tails are comparable $L_n$ weakly
converges to a geometric distribution (see Theorem \ref{main2} of
the present paper).

Also it was known that whenever $L_n \dod L$, where $L$ is a
random variable with a proper and nondegenerate probability law,
the law of $L$ is mixed Poisson (Proposition 1.2 in \cite{Iks}),
and that $L_n$ has the geometric distribution with parameter $1/2$
when $W\od 1-W$ (Proposition 7.1 in \cite{GIM2}).

Throughout the paper ${\rm geom}(a)$ denotes a random variable
which has the geometric distribution (starting at zero) with
success probability $a$, i.e.,
$$\mmp\{{\rm geom}(a)=m\}=a(1-a)^m, \ \ m\in\mn_0,$$ and $\mathcal{N}(0,1)$ denotes a random variable which has the
standard normal distribution.

We are ready to state our first result which treats the case of
'comparable tails' when $\mu=\nu=\infty$.
\begin{thm}\label{main2}
Suppose $\mu=\infty$ and
\begin{equation}\label{2}
\lin {\me W^n\over \me(1-W)^n}=c\in (0,\infty).
\end{equation}
Then
\begin{equation}\label{3}
L_n\ \dod \ L\od {\rm geom}((c+1)^{-1}), \ \ n\to\infty.
\end{equation}
In particular, relation \eqref{3} holds whenever the tails are
comparable, i.e.,
\begin{equation}\label{5}
\underset{x\downarrow 0}{\lim}\,{\mmp\{1-W \leq
x\}\over\mmp\{W\leq x\}}=c.
\end{equation}
\end{thm}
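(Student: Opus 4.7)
The plan is to realize $L_n$ as the number of zero decrements of a naturally occurring nonincreasing Markov chain and to invoke the general result on such chains that the paper promises in the abstract.

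Set $Z_0:=n$ and, for $k\geq 1$, $Z_k:=\#\{1\leq i\leq n : U_i\leq T_k\}$, the number of balls lying in boxes of index exceeding $k$. Conditionally on $T_k$ the balls contributing to $Z_k$ are uniformly distributed on $[0,T_k]$, so each of them lies in $[0,T_{k+1}]=[0,T_kW_{k+1}]$ with probability $W_{k+1}$. Hence the conditional law of $Z_{k+1}$ given $Z_k=z$ is the mixed binomial law of $\mathrm{Bin}(z,W)$, and $(Z_k)_{k\in\mn_0}$ is a nonincreasing Markov chain on $\mn_0$ absorbed at $0$ with one-step probabilities
$$p_z(z)=\me W^z,\qquad p_z(0)=\me(1-W)^z.$$
Box $k+1$ is empty iff $Z_{k+1}=Z_k$, and $Z_k>0$ precisely when $k<M_n$, so $L_n$ equals the number of zero decrements of $(Z_k)$ prior to absorption.

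The main step is then an application of the general theorem on nonincreasing Markov chains. Applied to $(Z_k)$, that result should identify the limit of $L_n$ as $\mathrm{geom}(1-\alpha)$, where $\alpha=\lin p_n(n)/(p_n(n)+p_n(0))$, provided a side condition preventing $(Z_k)$ from accumulating zero decrements in a regime where the ratio $p_z(z)/p_z(0)$ differs appreciably from its limit. Condition \eqref{2} yields $\alpha=c/(c+1)$, and hence $\mathrm{geom}(1-\alpha)=\mathrm{geom}(1/(c+1))$, which is precisely \eqref{3}. The side condition should be verifiable from $\mu=\infty$: non-trivial transitions send $z$ to $\mathrm{Bin}(z,W)$ conditioned to lie in $\{1,\ldots,z-1\}$, which for large $z$ is again of order $zW$ with $W$ spread over $(0,1)$, and the heavy tail of $|\log W|$ should prevent the trajectory from dwelling in a bounded range of states where the ratio is not yet close to $c$.

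For the \emph{in particular} clause I would deduce \eqref{2} from \eqref{5} by an Abelian argument. Starting from
$$\me W^n=n\int_0^1 x^{n-1}\mmp(1-W\leq 1-x)\,dx,$$
the substitution $y=n(1-x)$ produces
$$\me W^n=\int_0^n \left(1-\frac{y}{n}\right)^{n-1}\mmp\left(1-W\leq \frac{y}{n}\right)dy,$$
with a symmetric representation for $\me(1-W)^n$. Since \eqref{5} gives $\mmp(1-W\leq y/n)/\mmp(W\leq y/n)\to c$ uniformly in $y$ on $(0,n\delta)$ for small $\delta$, while the factor $(1-y/n)^{n-1}\leq e^{-y(1-1/n)}$ localizes the bulk of the integral in that range, dominated convergence applied to the difference $\me W^n-c\,\me(1-W)^n$ yields \eqref{2}.

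The most delicate point will be verifying the side condition in the general Markov chain theorem: one must show that zero decrements accumulated by $(Z_k)$ while it traverses any bounded range of states, where $p_z(z)/p_z(0)$ need not be close to $c$, become asymptotically negligible. This is precisely the step at which the hypothesis $\mu=\infty$ should be genuinely used. Once that is settled, the identification of the limiting parameter as $1/(c+1)$ and the Abelian passage from \eqref{5} to \eqref{2} are essentially mechanical.
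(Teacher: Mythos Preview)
Your approach is essentially identical to the paper's: the same nonincreasing Markov chain, the same application of the general zero-decrements theorem (Theorem~\ref{main}), and an equivalent Abelian argument (the paper's Lemma~\ref{taub} is your integral computation rewritten in terms of $|\log W|$ and $|\log(1-W)|$). The side condition you flag is precisely the hypothesis $\lim_n g_{n,k}=0$ for each fixed $k$ in Theorem~\ref{main}, which the paper dispatches by observing that $\mu=\infty$ forces the overshoot of the random walk $S_j=\sum_{i\le j}|\log W_i|$ at level $\log n$ to diverge in probability, citing formula~(16) of \cite{GINR} rather than reproving it.
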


The situation when $\mu<\infty$ and $\nu=\infty$ is covered by
Theorem \ref{main7} which is our second result. %Since it is
\begin{thm}\label{main7}
Suppose $\nu=\infty$, and the law of $|\log W|$ is non-lattice.
Set
\begin{equation*}\label{102} b_n:={1\over \mu}\int_{[1,n]}
{\psi(z)\over z}{\rm d}z,
\end{equation*}
where $\psi(s):=\me e^{-s(1-W)}$, $s\geq 0$.
\newline {\rm (a)} If $\sigma^2={\rm Var}\,(\log W)<\infty$ then, with
$a_n:=\sqrt{b_n}$, the limiting distribution of ${L_n-b_n\over
a_n}$ is standard normal.
\newline {\rm (b)} Assume that $\sigma^2=\infty$ and
\begin{equation}\label{domain0}
\int_{[0,x]} y^2 \mmp\{|\log W|\in {\rm d}y\} \ \sim \
\widetilde{\ell}(x), \ \ x\to\infty,
\end{equation}
for some $\widetilde{\ell}$ slowly varying at $\infty$. Let $c(x)$
be any positive function satisfying
$\lix\,x\widetilde{\ell}(c(x))/c^2(x)=1$ which implies that
$c(x)\sim x^{1/2}\ell^\ast(x)$, $x\to\infty$, for some $\ell^\ast$
slowly varying at $\infty$.
\newline {\rm (b1)}
If
\begin{equation}\label{555}
%\lix{c^2(x)\psi^2(e^x)\over \int_{[0,x]}\psi(e^y){\rm d}y}=0,
\lix \mmp\{|\log (1-W)|>x\}(\ell^\ast(x))^2=0
\end{equation}
then, with $a_n=\sqrt{b_n}$, the limiting distribution of
${L_n-b_n\over a_n}$ is standard normal. \newline {\rm (b2)}
Assume that
\begin{equation}\label{domain2}
\mmp\{|\log (1-W)|>x\} \ \sim \ \ell(x), \ \ x\to\infty,
\end{equation}
for some $\ell$ slowly varying at $\infty$, and that $$\lix
\mmp\{|\log (1-W)|>x\}(\ell^\ast(x))^2=\infty.$$ Then, with
$a_n:=\mu^{-3/2}c(\log n)\psi(n)$, the limiting distribution of
${L_n-b_n\over a_n}$ is standard normal.
\newline {\rm (c)} Assume that
\begin{equation}\label{domain1}
\mmp\{|\log W|>x\} \ \sim \ x^{-\alpha}\widetilde{\ell}(x), \ \
x\to \infty,
\end{equation}
for some $\widetilde{\ell}$ slowly varying at $\infty$ and
$\alpha\in (1,2)$. Let $c(x)$ be any positive function satisfying
$\lix \,x\widetilde{\ell}(c(x))/c^\alpha(x)=1$ which implies that
$c(x)\sim x^{1/\alpha}\ell^\ast(x)$, $x\to\infty$, for some
$\ell^\ast$ slowly varying at $\infty$.\newline {\rm (c1)} If
\begin{equation}\label{555555}
\lix \mmp\{|\log (1-W)|>x\}x^{2/\alpha-1}(\ell^\ast(x))^2=0,
\end{equation}
then, with $a_n=\sqrt{b_n}$, the limiting distribution of
${L_n-b_n\over a_n}$ is standard normal.\newline {\rm (c2)} Assume
that
\begin{equation}\label{domain3} \mmp\{|\log
(1-W)|>x\} \ \sim \ x^{-\beta}\ell(x), \ \ x\to\infty,
\end{equation}
for some $\beta\in [0, 2/\alpha-1]$ and some $\ell$ slowly varying
at $\infty$. In the case $\beta=2/\alpha-1$ assume additionally
that $$\lix
\mmp\{|\log(1-W)>x|\}x^{2/\alpha-1}(\ell^\ast(x))^2=\infty.$$ Then
$${L_n-b_n\over \mu^{-1-1/\alpha}c(\log n)\psi(n)} \ \dod \
\int_{[0,1]}v^{-\beta}{\rm d}Z(v),$$ where $(Z(v))_{v\in [0,1]}$
is an $\alpha$-stable L\'{e}vy process such that $Z(1)$ has
characteristic function
\begin{equation}\label{st1}
u\mapsto \exp\{-|u|^\alpha
\Gamma(1-\alpha)(\cos(\pi\alpha/2)+i\sin(\pi\alpha/2)\, {\rm
sgn}(u))\}, \ u\in\mr.
\end{equation}

\noindent Throughout one can take
$b_n^\prime:=\mu^{-1}\int_{[0,\,\log n]}\mmp\{|\log (1-W)|>x\}{\rm
d}x$ in place of $b_n$.
\end{thm}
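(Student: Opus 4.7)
The plan is to reduce the asymptotics of $L_n$ to those of a renewal shot-noise process evaluated at time $t=\log n$, and then to apply general weak-convergence results for such processes under the given tail assumptions on $|\log W|$ and $|\log(1-W)|$. Writing $S_k:=|\log W_1|+\cdots+|\log W_k|$ and $h(u,v):=e^{-e^u v}$, I would Poissonize the number of balls: conditionally on $(S_k,W_k)$ the box counts are independent Poissons with parameters $nP_k$, and the Poissonized number of empty boxes becomes a sum of independent Bernoullis with means $h(\log n-S_{k-1},1-W_k)$. Its conditional mean is the renewal shot-noise
$$Y(t):=\sum_{k\geq 1}h(t-S_{k-1},1-W_k),\qquad \me Y(t)=\int_{[0,\infty)}\psi(e^{t-s})U({\rm d}s),$$
with $\psi(s):=\me e^{-s(1-W)}$ and $U$ the renewal measure of $|\log W|$. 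The key renewal theorem immediately yields $\me Y(\log n)\sim b_n$, matching the centering of the theorem.

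Three auxiliary steps bridge $L_n$ and $Y(\log n)$: a depoissonization argument relating the Poisson and fixed-$n$ models; the estimate that the Poissonized empty-box count differs from $Y(\log n)$ by $o_\mmp(a_n)$, which follows from the conditional-variance bound $\leq\me Y(\log n)\sim b_n$; and an estimate that the empty boxes with index exceeding $M_n$, counted by $Y$ but not by $L_n$, contribute $o_\mmp(a_n)$. The main work is then to analyse
$$Y(t)-\me Y(t)=A(t)+B(t),\qquad A(t):=\sum_k\bigl[\psi(e^{t-S_{k-1}})-\me\psi(e^{t-S_{k-1}})\bigr],$$
with $B(t):=Y(t)-\me Y(t)-A(t)=\sum_k[h(t-S_{k-1},1-W_k)-\psi(e^{t-S_{k-1}})]$. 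Here $A(t)$ is a smooth functional of the random walk alone and is handled by a functional limit theorem for $(S_{[\lambda t]})$ combined with continuous mapping through $\psi$, while $B(t)$ is, conditionally on $(S_k)$, a sum of independent centered random variables whose conditional variance is itself of renewal-shot-noise form.

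The five cases correspond to a choice of which of $A$ and $B$ drives the limit, together with its scaling. Cases (a), (b1), (c1) yield a Gaussian limit with scaling $\sqrt{b_n}$, obtained by combining the functional or $\alpha$-stable scaling limit of $(S_k)$ fed through $\psi$ (giving the distribution of $A$) with the conditional Gaussian limit of $B$. The tail conditions \eqref{555} and \eqref{555555} are precisely the thresholds below which the shot-noise term $B$ does not overtake the random-walk term $A$. In (b2) and (c2) the heavier tail of $|\log(1-W)|$ makes $B$ dominate: in (b2) this yields a Gaussian limit with the altered scaling $\mu^{-3/2}c(\log n)\psi(n)$, and in (c2) a stable functional limit theorem applied to $B$ with regularly varying response produces the stable-integral limit $\int_{[0,1]}v^{-\beta}{\rm d}Z(v)$.

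The main obstacle is the weak-convergence theory for renewal shot-noise processes of the form $B(t)$ with regularly varying response and heavy-tailed marks, which must be developed to identify the stable-integral limit in (c2) and to pinpoint the crossover thresholds in conditions \eqref{555}--\eqref{domain3}. A secondary but still delicate point is the reduction of $L_n$ to $Y(\log n)$: the depoissonization error and the truncation at the random index $M_n$ must both be shown to be $o_\mmp(a_n)$, uniformly across the five regimes.
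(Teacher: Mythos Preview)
Your overall architecture matches the paper's: Poissonize, reduce $L(t)$ to the environment-measurable quantity $Y(t)$ (the paper calls it $L^\ast(t)$, truncated by $1_{\{S_{k-1}\le \log t\}}$), and decompose as $A+B$ with $A$ a functional of the renewal process alone and $B$ a conditionally centered sum. Your identification of the tools---a functional limit theorem for the renewal process applied to $A$, and a martingale CLT for $B$---is also correct.

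The substantive error is that you have the dominance backwards in every case. In cases (a), (b1), (c1) it is $B$, not $A$, that produces the $\sqrt{b_n}$-scaled standard normal: the martingale CLT gives
\[
\frac{B(\log t)}{\sqrt{\mu^{-1}k(\log t)}}\ \dod\ \mathcal{N}(0,1),
\]
while under the conditions \eqref{555} or \eqref{555555} (or $\sigma^2<\infty$) one shows $A(\log t)=o_P\big(\sqrt{k(\log t)}\big)$. Those conditions control the size of $\varphi(t)=\psi(e^t)$, hence of the response in $A$; they are precisely what makes the random-walk term $A$ \emph{negligible}, not what prevents $B$ from overtaking $A$. Conversely, in (b2) and (c2) the heavier tail of $|\log(1-W)|$ inflates $\varphi$, and it is $A$ that dominates: the functional limit theorem for $\widehat N(t\cdot)$ pushed through the response $\varphi$ yields the Gaussian limit in (b2) and the stable-integral limit $\int_{[0,1]}v^{-\beta}{\rm d}Z(v)$ in (c2), at the scale $g(\log t)\varphi(\log t)$; the martingale piece $B$, whose natural scale is always $\sqrt{b_n}$, is $o_P$ at this larger scale. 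Your claim that the stable integral arises from $B$ is not self-consistent: $B$ is a sum of bounded martingale differences with conditional variance $\sim b_n$ and can only deliver a Gaussian.

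Two smaller points. First, your $Y(t)=\sum_{k\ge1}h(t-S_{k-1},1-W_k)$ diverges as written, since $h(u,v)\to1$ as $u\to-\infty$; the paper truncates by $1_{\{S_{k-1}\le\log t\}}$ and shows the remainder is tight. Second, the reduction from $L(t)$ to $L^\ast(t)$ is not just a conditional-variance bound: the paper handles $M(t)-N(\log t)$ and $K(t)-\me(K(t)\mid(W_k))$ separately, each shown tight, and uses the stationary renewal process to get the exact centering $\mu^{-1}k(\log t)$ rather than merely $\me C(t)\sim\mu^{-1}k(t)$.
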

\begin{rem}
The integrals $\int_{[0,1]}v^{-\beta}{\rm d}Z(v)$ appearing in the
theorem and also in formulae \eqref{101} and \eqref{102} are
understood to be equal to $Z(1)$ in the case $\beta=0$ and to be
defined by integration by parts formula
$$\int_{[0,1]}v^{-\beta}{\rm d}Z(v)=Z(1)+\beta\int_{[0,1]}v^{-\beta-1}Z(v){\rm d}v$$
in the case $\beta\in (0,1/\alpha)$ (when referring to formula
\eqref{101} we take $\alpha=2$). Note that the latter is
consistent with the standard definition of stochastic integrals
(with respect to semimartingales). It is known that $$\log \me
\exp\bigg({\rm i}t \int_{[0,1]}v^{-\beta}{\rm
d}Z(v)\bigg)=\int_{[0,1]}\log \me\exp \big({\rm
i}tv^{-\beta}Z(1)\big){\rm d}v, \ \ t\in\mr,$$ from which it
follows that the integral is indeed well-defined only if $\beta\in
[0,1/\alpha)$ and that
$$\int_{[0,1]}v^{-\beta}{\rm d}Z(v)\od (1-\alpha\beta)^{-1/\alpha}Z(1).$$
\end{rem}
\begin{rem}\label{en1}
Theorem \ref{main7} does not cover two interesting cases. Assume
that the standing assumptions of the theorem hold.\newline {\rm
Case (b3)}: Condition \eqref{domain0} holds, $\sigma^2=\infty$,
and
$$\mmp\{|\log (1-W)|>x\} \ \sim \ {d\over (\ell^\ast(x))^2}, \ \
x\to\infty,$$ for some $d>0$ and $\ell^\ast(x)$ defined in part
(b) of the theorem. \newline {\rm Case (c3)}: Condition
\eqref{domain1} holds, and
$$\mmp\{|\log (1-W)|>x\} \ \sim \ {dx^{1-2/\alpha}\over
(\ell^\ast(x))^2}, \ \ x\to\infty,$$ for some $d>0$ and
$\ell^\ast(x)$ defined in part (c) of the theorem.

Some partial results and discussion of the problems which arise in
these cases can be found in Remark \ref{en}.
\end{rem}
\begin{rem}
We conjecture that under the assumption $\mu<\infty$ the
conditions given in Theorem \ref{main7} and Remark \ref{en1} are
necessary and sufficient for the weak convergence of $L_n$,
properly normalized and centered.
\end{rem}

The rest of the paper is structured as follows. In Section
\ref{nond} we point out the set of conditions under which the
number of zero decrements of a nonincreasing Markov chain weakly
converges to a geometric law (Theorem \ref{main}). Theorem
\ref{main2} then follows as a particular case. Section \ref{ma} is
devoted to proving Theorem \ref{main7}. Some results derived in
Section \ref{ma} can be used to answer a question asked in
\cite{Res}. A detailed discussion of this is given in Section
\ref{re}. Some auxiliary facts are collected in the Appendix.

\section{Number of zero decrements of nonincreasing Markov
chains}\label{nond}

\subsection{Definitions}\label{non}

With $M\in\mn_0$ given and any $n\geq M$, $n\in\mn$, let
$I:=\big(I_k(n)\big)_{k\in\mn_0}$ be a {\it nonincreasing Markov
chain} with $I_0(n)=n$, state space $\mn$ and transition
probabilities
\begin{eqnarray*}
\mmp\{I_k(n)=j|I_{k-1}(n)=i\}& = &\pi_{i,j},\;\;i\geq M+1 \ \ \text{and either} \ M< j\leq i \ \text{or} \ M=j<i,\\
\mmp\{I_k(n)=j|I_{k-1}(n)=i\}& = &0,\;\;i< j,\\
\mmp\{I_k(n)=M|I_{k-1}(n)=M\}& = &1.
\end{eqnarray*}
Denote by
$$Z_n:=\#\big\{k\in\mn_0: I_k(n)-I_{k+1}(n)=0, I_k(n)>M\big\}$$
the number of zero decrements of the Markov chain before the
absorption. Assuming that, for every $M+1\leq i\leq n$,
$\pi_{i,\,i-1}>0$, the absorption at state $M$ is certain, and
$Z_n$ is a.s.\,finite.

Neglecting zero decrements of $I$ along with renumbering of
indices lead to a {\it decreasing Markov chain}
$J:=\big(J_k(n)\big)_{k\in\mn_0}$ with $J_0(n)=n$ and transition
probabilities
$$\widetilde{\pi}_{i,j}={\pi_{i,j}\over 1-\pi_{i,\,i}}, \ \ i>j\geq
M$$ (the other probabilities are the same as for $I$). The chain
$J$ visits a given state $k$ and the chain $I$ visits the state
$k$ for the first time with the same probability
$$g_{n,k}:=\sum_{m\geq
0}\mmp\{J_m(n)=k\}, \ \ k\leq n, k\in\mn.$$ Note that $g_{n,n}=1$
and that $g_{n,k}$ is the potential function of $J$.

Let $(R_j)_{M+1\leq j\leq n}$ be independent random variables such
that $R_j\od {\rm geom}(1-\pi_{j,j})$. Assuming the $R_j$'s
independent of the sequence of states visited by $J$ we may
identify $R_j$ with the time $I$ spends in the state $j$ provided
this state is visited. With this at hand $Z_n$ can be conveniently
represented as
\begin{equation}\label{rep}
Z_n\od \sum_{k\geq 0}R_{J_k(n)}1_{\{J_k(n)>M\}}.
\end{equation}

\subsection{Main result of the section}

Theorem \ref{main} given below proves that the number of zero
decrements of a nonincreasing Markov chain weakly converges to a
geometric law whenever the probability of delay at the present
state and that of transition to the absorption state are
asymptotically balanced, and the Markov chain has no 'stationary'
version. An interesting feature of this quite general result is
that its proof needs nothing beyond simple distributional
recurrence \eqref{15}.
\begin{thm}\label{main}
Assume that $\lin g_{n,k}=0$ for each $k\in\mn$, $\lin
\pi_{n,n}=0$ and
\begin{equation}\label{4}
\lin {\pi_{n,n}\over \pi_{n,M}}=c\in (0,\infty).
\end{equation}
Then
$$Z_n\ \dod \ Z\od {\rm geom}((c+1)^{-1}), \ \ n\to\infty.$$
\end{thm}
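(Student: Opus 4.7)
The plan is to prove pointwise convergence of the probability generating functions $\varphi_n(t):=\me[t^{Z_n}]$, $t\in[0,1)$, to $L(t):=(c+1-ct)^{-1}$, the PGF of ${\rm geom}((c+1)^{-1})$; by the continuity theorem this is equivalent to \eqref{3}. The starting point is the one-step distributional recurrence obtained by conditioning on $I$'s first transition out of $n$ (self-loop contributes $+1$, direct absorption contributes $0$, jump to $j\in(M,n)$ restarts $Z_n$ as an independent copy of $Z_j$):
\begin{equation*}
\varphi_n(t) = t\pi_{n,n}\varphi_n(t) + \pi_{n,M} + \sum_{j=M+1}^{n-1}\pi_{n,j}\varphi_j(t).
\end{equation*}

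Next I would identify the target by setting $c_n:=\pi_{n,n}/\pi_{n,M}$ and $L_n(t):=(1+c_n(1-t))^{-1}$; a direct calculation verifies the identity $L_n(\pi_{n,n}(1-t)+\pi_{n,M})=\pi_{n,M}$ one obtains from the recurrence by replacing every $\varphi_j$ on the right by the constant $L_n$, and by \eqref{4}, $L_n(t)\to L(t)$. Writing $e_n(t):=\varphi_n(t)-L_n(t)$ and subtracting gives
\begin{equation*}
(1-t\pi_{n,n})e_n(t) = \sum_{j=M+1}^{n-1}\pi_{n,j}e_j(t) + r_n(t),\qquad r_n(t):=\sum_{j=M+1}^{n-1}\pi_{n,j}(L_j(t)-L_n(t)).
\end{equation*}
To check $r_n(t)\to 0$ I would split the sum at a cutoff $K$: the inequality $\pi_{n,j}\le(1-\pi_{n,n})\widetilde{\pi}_{n,j}\le g_{n,j}$ combined with $g_{n,k}\to 0$ forces $\sum_{j\le K}\pi_{n,j}\to 0$, while $|L_j-L_n|$ is uniformly small for $j,n\ge K$ large.

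The main obstacle is deducing $e_n\to 0$ from this error recurrence, since a direct $\limsup$ bound yields only the vacuous $\limsup|e_n|\le\limsup|e_n|$. To close the loop I would iterate the one-step recurrence into the path representation supplied by \eqref{rep}:
\begin{equation*}
\varphi_n(t) = \me\prod_{k=0}^{\tau-1}\phi_{J_k(n)}(t),\qquad \phi_j(t):=\frac{1-\pi_{j,j}}{1-t\pi_{j,j}},
\end{equation*}
with $\tau$ the hitting time of $M$ by $J$. Since $g_{n,k}\to 0$ for every fixed $k$, each state visited by $J$ before absorption diverges in probability, whence $\max_{k<\tau}\pi_{J_k(n),J_k(n)}\to 0$; Taylor expanding $\log\phi_j(t)=-(1-t)\pi_{j,j}(1+o(1))$ then yields
\begin{equation*}
\prod_{k=0}^{\tau-1}\phi_{J_k(n)}(t) = \exp\bigl(-(1-t)S_n\bigr)(1+o_{\mmp}(1)),\qquad S_n:=\sum_{k=0}^{\tau-1}\pi_{J_k(n),J_k(n)}.
\end{equation*}

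The delicate final step will be identifying the weak limit of $S_n$. By \eqref{4}, $S_n=c\widetilde{T}_n+o_{\mmp}(1)$ with $\widetilde{T}_n:=\sum_{k=0}^{\tau-1}\widetilde{\pi}_{J_k(n),M}$, and I would represent $\tau$ as the first passage of the cumulative hazard $-\sum_k\log(1-\widetilde{\pi}_{J_k(n),M})$ over an independent ${\rm Exp}(1)$ random variable; the overshoot should vanish since individual jumps tend to zero, giving $\widetilde{T}_n\dod{\rm Exp}(1)$ and hence $S_n\dod{\rm Exp}(1/c)$. Bounded convergence would then yield $\varphi_n(t)\to\me e^{-(1-t)S}=(1+c(1-t))^{-1}=L(t)$ with $S\sim{\rm Exp}(1/c)$, which combined with the continuity theorem for PGFs completes the proof.
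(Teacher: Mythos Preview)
Your argument is essentially correct, and it takes a genuinely different route from the paper's.

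\textbf{What the paper does.} The paper proves the theorem by the method of moments. From the one-step recurrence for the chain $J$ (not $I$) it derives
\[
Z_n \ \od \ \widehat{Z}_{J_1(n)} + R_n,
\]
computes $\me Z_n=\sum_k g_{n,k}\me R_k=\sum_k \mmp\{V_n=k\}\pi_{k,k}/\pi_{k,0}$, and uses a Toeplitz--Schur averaging lemma (Lemma~\ref{Toeplitz1}) together with $\mmp\{V_n=k\}\to 0$ and $\pi_{k,k}/\pi_{k,0}\to c$ to get $\me Z_n\to c$. Higher moments are handled by the same recurrence and an induction, matched against the explicit moment recursion for the geometric law (Lemma~\ref{6}). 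The whole proof is short and uses nothing beyond the recurrence and elementary averaging.

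\textbf{What you do.} You bypass moments entirely: the product representation $\varphi_n(t)=\me\prod_{k<\tau}\phi_{J_k(n)}(t)$, a Poisson-type linearisation $\log\phi_j(t)\approx -(1-t)\pi_{j,j}$, and then the identification $S_n\dod c\cdot\text{Exp}(1)$ via a cumulative-hazard coupling. This is more probabilistic and explains \emph{why} the limit is geometric: it is a mixed Poisson with exponential mixing rate, in line with the general structure result for $L_n$ quoted in the introduction.

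\textbf{Two points that need care in your write-up.} First, the hazard representation cannot literally use an exponential independent of $J$, since $\tau$ is $J$-measurable. What you need is the standard ``ghost chain'' construction: build a strictly decreasing chain $J^\ast$ on $\{M+1,\dots,n\}$ with transitions $\widetilde\pi_{i,j}/(1-\widetilde\pi_{i,M})$, take $E\sim\text{Exp}(1)$ independent of $J^\ast$, set $\tau=\min\{m:\,-\sum_{k<m}\log(1-\widetilde\pi_{J^\ast_k,M})>E\}$, and let $J$ agree with $J^\ast$ before $\tau$ and jump to $M$ at $\tau$. A direct check shows this reproduces the law of $J$. Then the overshoot $H_\tau-E$ is bounded by $-\log(1-\widetilde\pi_{V_n,M})$, and $V_n=J_{\tau-1}\to\infty$ in probability because $\mmp\{V_n\le K\}\le\sum_{k\le K}g_{n,k}\to 0$; combined with $\widetilde\pi_{j,M}\to 0$ this gives $H_\tau\to E$ in probability and hence $\widetilde T_n\dod\text{Exp}(1)$. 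Second, your Taylor step and the replacement $S_n=c\widetilde T_n+o_{\mmp}(1)$ both use tightness of $S_n$ (equivalently of $\widetilde T_n$); logically this should be secured first, which your hazard argument indeed does via $\widetilde T_n\le H_\tau\le E-\log(1-\widetilde\pi_{V_n,M})$.

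\textbf{Trade-offs.} The paper's moment argument is shorter and needs only the Toeplitz lemma; yours requires constructing and verifying the hazard coupling, but in return it yields the limiting mechanism transparently and avoids any induction over moment orders. The initial PGF error-recurrence you set up (the $e_n,r_n$ computation) is, as you note, a dead end on its own; you can safely drop it from the final version.
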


Theorem \ref{main} will be proved by the method of moments. To
this end, we have to possess some information about the moments of
integer orders of the limiting geometric law. The explicit
expressions are complicated and actually not needed. The moments
satisfy a simple recurrence which is sufficient for our needs.
\begin{lemma}\label{6}
Let $X\od {\rm geom}(a)$, $a>0$. The moments $m_k:=\me X^k$,
$k\in\mn$ can be recursively obtained via
\begin{equation}\label{7}
m_1=b, \ \ m_j=b\bigg(1+\sum_{i=1}^{j-1}{j \choose i}m_i\bigg), \
\ j=2,3,\ldots,
\end{equation}
where $b:=(1-a)/a$.
\end{lemma}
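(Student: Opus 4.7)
The plan is to exploit the familiar ``memoryless'' shift property of the geometric law to get a one-step distributional recursion, then expand via the binomial theorem.

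Concretely, I would first observe that since $\mmp\{X=m\}=a(1-a)^m$ for $m\in\mn_0$, a direct reindexing of the sum defining $m_j$ gives
$$m_j=\sum_{m\geq 1}m^j a(1-a)^m=(1-a)\sum_{m\geq 0}(m+1)^j a(1-a)^m=(1-a)\,\me(X+1)^j,$$
which is just the identity that, conditionally on $X\geq 1$, the variable $X-1$ has the same law as $X$.

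Next I would apply the binomial theorem to $(X+1)^j=\sum_{i=0}^{j}{j\choose i}X^i$, so that
$$m_j=(1-a)\bigg(m_j+1+\sum_{i=1}^{j-1}{j\choose i}m_i\bigg).$$
Moving the $(1-a)m_j$ term to the left-hand side and dividing by $a$ yields
$$m_j={1-a\over a}\bigg(1+\sum_{i=1}^{j-1}{j\choose i}m_i\bigg)=b\bigg(1+\sum_{i=1}^{j-1}{j\choose i}m_i\bigg),$$
which is \eqref{7} for $j\geq 2$; the case $j=1$ follows from the empty-sum convention (and is also the well-known mean $b=(1-a)/a$ of a geometric law started at zero). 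There is really no obstacle here: the only mildly delicate point is making sure that the $m_j$ on the right of the expanded identity is legitimately subtracted, which requires that $m_j$ be finite. Finiteness is immediate from the exponential tail of the geometric distribution, so the argument is complete.
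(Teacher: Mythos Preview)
Your proof is correct and follows essentially the same route as the paper: both derive the identity $\me X^j=(1-a)\me(1+X)^j$ from the shift/memoryless property of the geometric law and then expand via the binomial theorem. The only cosmetic difference is that the paper phrases the identity through the representation $X\od 1_{\{\zeta_1=0\}}(1+X')$ with an independent copy $X'$, while you obtain it by a direct reindexing of the defining sum.
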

\begin{proof}
Let $(\zeta_k)_{k\in\mn}$ be independent Bernoulli random
variables with success probability $a$. Then $$X\od \inf\{k\in\mn:
\zeta_k=1\}-1=1_{\{\zeta_1=0\}}\big(1+(\inf\{k\in\mn\backslash\{1\}:
\zeta_k=1\}-1)\big)=:1_{\{\zeta_1=0\}}(1+X^\prime),$$ where
$X^\prime$ is independent of $\zeta_1$ and $X^\prime\od X$. The
latter implies $$\me X^j=(1-a)\me (1+X)^j, \ \ j\in\mn,$$ and
representation \eqref{7} follows.
\end{proof}

Now we are ready to prove Theorem \ref{main}. For notational
convenience we assume that $M=0$. For other $M$'s the argument is
the same.

Let $V_n$ denote the size of the last decrement. Then
\begin{equation}\label{5005}
\mmp\{V_n=k\}=g_{n,k}\widetilde{\pi}_{k,0}=g_{n,k}{\pi_{k,0}\over
1-\pi_{k,k}}, \ \ k=1,2,\ldots, n,
\end{equation}
and
\begin{equation}\label{8}
\lin \mmp\{V_n=k\}=0, \ \ \text{for each} \ k\in\mn.
\end{equation}

Since the geometric law is uniquely determined by its moments, it
suffices to prove that, for each $i\in\mn$, $\lin \me Z_n^i=\me
Z^i$. To this end, we will use the induction on $i$ and start with
the case $i=1$. Using representation \eqref{rep} and conditioning
on the first decrement of $J$ we deduce the distributional
equality
\begin{equation}\label{15}
Z_n \od \widehat{Z}_{J(n)}+R_n,
\end{equation}
where, for each $k\in\mn$, $\widehat{Z}_k$ is independent of both
$J(n):=J_1(n)$ and $R_n$, and has the same law as $Z_k$. Equality
\eqref{15} (or just \eqref{rep}) implies that
\begin{eqnarray*}
\me Z_n&=&\sum_{k=1}^n g_{n,k}\me
R_k\overset{\eqref{5005}}{=}\sum_{k=1}^n
\mmp\{V_n=k\}{\pi_{k,k}\over \pi_{k,0}}
\end{eqnarray*}
Recalling \eqref{8} and \eqref{4} and applying Lemma
\ref{Toeplitz1} to the last sum lead to the conclusion $\lin \me
Z_n=c=\me Z$.

Assume now that $\lin \me Z_n^i=\me Z^i$ for all $i\leq j-1$,
$i\in\mn$. We have to prove that $\lin \me Z_n^j=\me Z^j$. In view
of Lemma \ref{6} it suffices to check that $\lin \me Z_n^j=m_j$,
where $m_j$ satisfies \eqref{7} with $b=c$ and $m_i=\me Z^i$.
Using \eqref{15} yields $$\me Z_n^j=\me
Z^j_{J(n)}+\sum_{i=0}^{j-1} {j \choose i}\me Z^i_{J(n)}\me
R_n^{j-i}=: \me Z^j_{J(n)}+b_n,$$ or, equivalently, $$\me
Z_n^j=\sum_{k=1}^n g_{n,\,k}
b_k=\sum_{k=1}^n\mmp\{V_n=k\}{1-\pi_{k,\,k}\over
\pi_{k,\,0}}b_k.$$ In view of Lemma \ref{Toeplitz1} to finish the
proof it remains to show that
$$\lin {1-\pi_{n,\,n}\over \pi_{n,\,0}}b_n=c\bigg(1+\sum_{i=1}^{j-1}{j
\choose i}m_i\bigg)$$ or equivalently that, for $i\leq j-1$,
\begin{equation}\label{166}
\lin {1-\pi_{n,\,n}\over \pi_{n,\,0}}\me Z^i_{J(n)}\me
R_n^{j-i}=c\, \me Z^i \ \ \text{and} \ \ \lin {1-\pi_{n,\,n}\over
\pi_{n,\,0}}\me R_n^j=c.
\end{equation}
Applying Lemma \ref{6} with $a=1-\pi_{n,\,n}$ to the $R_n$'s we
conclude that $$\me R_n^{j-i} \ \sim \ \me R_n\ \sim \
\pi_{n,\,n}, \ \ n\to\infty.$$ Further, one can easily check that
$\lin b_n=0$, hence $$\lin \me Z_{J(n)}^i=\me Z^i, \ \ i\leq
j-1.$$ These two observations immediately establish \eqref{166}.
The proof is complete.

\subsection{Proof of Theorem \ref{main2}}

In this subsection we prove Theorem \ref{main2} by an application
of Theorem \ref{main}. To distinguish general (nonincreasing)
Markov chains in the previous subsections from the particular
Markov chain discussed below we mark all the quantities which
correspond to the latter with asterisk, for instance, $I\to
I^\ast$, $g_{n,k}\to g^\ast_{n,k}$ etc.

Now we present one more construction of the Bernoulli sieve which
highlights the connection with nonincreasing Markov chains. The
Bernoulli sieve can be realized as a random occupancy scheme in
which $n$ 'balls' are allocated over an infinite array of 'boxes'
indexed $1$, $2,\ldots$ according to the following rule. At the
first round each of $n$ balls is dropped in box $1$ with
probability $W_1$. At the second round each of the remaining balls
is dropped in box $2$ with probability $W_2$, and so on. The
procedure proceeds until all $n$ balls get allocated. Let
$I^\ast_k(n)$ denote the number of remaining balls (out of $n$)
after the $k$th round. Then $I^\ast:=(I^\ast_k(n))_{k\in\mn_0}$ is
a pattern of nonincreasing Markov chains described in Subsection
\ref{non} with $M=0$ and
\begin{equation}\label{18}
\pi^\ast_{i,j}={i\choose j}\me W^j(1-W)^{i-j}, \ \ j\leq i.
\end{equation}
It is plain that $L_n$ is the number of zero decrements of
$I^\ast$ before the absorption.

The assumption $\mu=\infty$ implies that $\lin g^\ast_{n,k}=0$,
for each $k\in\mn$. In the case that the law of $|\log W|$ is
nonlattice this fact was pointed out in formula (16) in
\cite{GINR}. The complementary case does not require any new proof
once one has noticed that the overshoot at point $x$ of a standard
random walk diverges to $+\infty$ in probability (as $x\to\infty$)
under the sole assumption that the step of the random walk has
infinite mean. In view of \eqref{18} $\pi^\ast_{n,n}=\me W^n\to
0$, as $n\to\infty$ (recall that the law of $W$ has no atom at
$1$), and condition \eqref{4} reduces to \eqref{2}. According to
Theorem \ref{main}, relation \eqref{3} holds.

Condition \eqref{5} is equivalent to
$$\underset{x\downarrow 0}{\lim}\, {\mmp\{|\log W|\leq x\}\over \mmp\{|\log (1-W)|\leq
x\}}=c.$$ Applying Lemma \ref{taub} with $\xi=|\log W|$ and
$\eta=|\log(1-W)|$ establishes implication \eqref{5} $\Rightarrow$
\eqref{2}, thereby completing the proof of Theorem \ref{main2}.

\section{Proof of Theorem \ref{main7}}\label{ma}
%
%We find it convenient to collect the main notation at one place.

%\noindent {\sc Notation}: $G(x):=\mmp\{|\log(1-W)|\leq x\}$,
%$x\geq 0$; $\psi(t):=\me e^{-t(1-W)}$, $t\geq 0$;
%$\varphi(t):=\psi(e^t)$, $f(t):=\varphi(t)-\exp(-e^t)$, $t\in\mr$;
%$$m(x):=\int_0^x (1-G(y)){\rm d}y, \ \ k(x):=\int_0^x
%\varphi(y){\rm d}y, \ \ x>0.$$ Set $$S_0:=0, \ \ S_k:=|\log
%W_1|+\ldots+|\log W_k|, \ \ k\in\mn;$$  $N(x):=\inf\{k\in\mn_0:
%S_k>x\}$, $x\geq 0$.

In the first (main) part of the proof we work with a poissonized
version of the Bernoulli sieve. Specifically we assume that the
balls are thrown at arrival times $(\tau_n)_{n\in\mn}$ of a unit
rate Poisson process $(\pi_t)_{t\geq 0}$. The quantity in focus is
then $L(t):=L_{\pi_t}$, where $(\pi_t)$ is independent of $(L_j)$.
At the last step of the proof we return to the original, fixed $n$
problem (this step is called {\em depoissonization}) and prove the
implication
$$ {L(t)-b(t)\over a(t)} \ \dod \ X, \ \ t\to\infty \Rightarrow \ {L_n-b(n)\over a(n)} \ \dod \ X, \ \ n\to\infty.$$

\noindent Set $$S_k:=-\log T_k=|\log W_1|+\ldots+|\log W_k| \ \
\text{and} \ \ \widehat{S}_k:=\widehat{S}_0+S_k, \ \ k\in\mn_0,$$
where $\widehat{S}_0$ is a random variable which is independent of
$(S_k)$ and has distribution
$$\mmp\{\widehat{S}_0\leq x\}=\mu^{-1}\int_{[0,x]}\mmp\{|\log
W|>y\}{\rm d}y, \ \ x\geq 0.$$ Define $$N(x):=\inf\{k\in\mn_0:
S_k>x\}=\#\{k\in\mn_0: S_k\leq x\} \ \ \text{and} \ \
\widehat{N}(x):=\#\{n\in\mn_0: \widehat{S}_n\leq x\}, \ \ x\geq
0,$$ and recall that $(N(x))_{x\geq 0}$ and
$(\widehat{N}(x))_{x\geq 0}$ are non-stationary and stationary
renewal processes, respectively. For later use, we recall (see
p.~55 in \cite{Gut} for the proof) that $N(x)$ enjoys the
following (distributional) subadditivity property
\begin{equation}\label{yyy}
N(x+y)-N(x)\overset{d}{\leq} N(y), \ \ x,y\geq 0.
\end{equation}
In the sequel we work with the following random variables
$$C(t):=\sum_{k\geq 0}\varphi(t-S_k)1_{\{S_k\leq t\}}=\int_{[0,\,t]}\varphi(t-x){\rm d}N(x), \ \ t\geq 0$$
and $$\widehat{C}(t):=\sum_{k\geq
0}\varphi(t-\widehat{S}_k)1_{\{\widehat{S}_k\leq
t\}}=\int_{[0,\,t]}\varphi(t-x){\rm d}\widehat{N}(x), \ \ t\geq
0,$$ where $\varphi(t):=\psi(e^t)$, $t\in\mr$, $\psi(t):=\me
e^{-t(1-W)}$, $t\geq 0$.

We show in Lemma \ref{au} that convergence in distribution of
$L(t)$ is completely determined by convergence in distribution of
$$L^\ast(t):=\sum_{k\geq
1}\exp(-te^{-S_{k-1}}(1-W_k))1_{\{S_{k-1}\leq \log t\}}.$$ The
Bernoulli sieve is governed by two sources of randomness:
randomness of the 'environment' $(W_k)$ and sampling variability
(i.e. the variability of the occupancy scheme with {\it
deterministic} frequencies obtained by conditioning on $(W_k)$).
Since $L^\ast(t)$ is a function of the environment alone we
conclude that the weak convergence of $L(t)$ ($L_n$) is completely
determined by the randomness of the environment, whereas the
influence of the sampling variability is negligible.

In its turn convergence in distribution of $L^\ast(t)$ is
determined either by that of $L^\ast(t)-C(\log t)$ or that of
$C(\log t)$, or that of both, and our main task is to find out
what is the extent of their interplay. In the cases (a), (b1) and
(c1) the contribution of $L^\ast(t)-C(\log t)$ dominates, whereas
in the cases (b2) and (c2) it is negligible in comparison with the
contribution of $C(\log t)$.\footnote{It seems that there are
situations (cases (b3) and (c3) introduced in Remark \ref{en1})
when contributions of both variables are significant, and both of
these determine the asymptotics of $L(t)$. See Remark \ref{en1}
and Remark \ref{en} for more details.}

We divide the proof of the theorem into several steps.

\noindent {\sc Step 1}. The purpose of this step is proving a
central limit theorem for $L(t)-C(\log t)$ (Lemma \ref{main6}). To
this end we first show that the asymptotic behavior of $L(t)$
coincides with that of $L^\ast(t)$.

In what follows we write that the family of random variables is
tight meaning that the family of laws of these random variables is
tight.
\begin{lemma}\label{au}
Whenever $\mu<\infty$ and the law of $|\log W|$ is non-lattice,
the families $\big(L(t)-L^\ast(t)\big)_{t\geq 1}$ and
$\big(C(t)-\widehat{C}(t)\big)_{t\geq 0}$ are tight.
\end{lemma}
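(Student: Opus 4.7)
The plan is to establish the two tightness claims separately, both via renewal-theoretic arguments grounded in the subadditivity \eqref{yyy} and the key renewal theorem, which are available under $\mu<\infty$ together with the non-lattice hypothesis.

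For the first family, my first step is to interpolate through the auxiliary quantity $\widetilde L(t):=\sum_{k=1}^{N(\log t)}1_{\{\eta_k(t)=0\}}$, where $\eta_k(t)$ denotes the Poissonized count of balls in box $k$. Conditional on $\mathcal{F}_W:=\sigma(W_k:k\ge 1)$ the $\eta_k(t)$ are independent Poisson with mean $tP_k$, so $L^\ast(t)=\me[\widetilde L(t)\mid\mathcal{F}_W]$. Tightness of $L(t)-L^\ast(t)$ is thus reduced to tightness of the two differences $L(t)-\widetilde L(t)$ and $\widetilde L(t)-L^\ast(t)$. The first, a pathwise step, is handled by $|L(t)-\widetilde L(t)|\le|M(t)-N(\log t)|$ combined with the observation that $M(t)=N(-\log U_{(1)})$ (where $U_{(1)}$ is the smallest point of a Poisson$(t)$ process on $[0,1]$) and that $-\log U_{(1)}-\log t$ is asymptotically Gumbel, whence \eqref{yyy} closes the argument. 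The second is a conditional second-moment calculation: the conditional variance equals $V_t:=\sum_{k=1}^{N(\log t)}e^{-tP_k}(1-e^{-tP_k})$, and taking expectation gives
$$\me V_t=\int_{[0,\log t]}\bigl(\psi(te^{-s})-\psi(2te^{-s})\bigr)U(\mathrm{d}s).$$
After the substitution $y=\log t-s$ the integrand becomes $y\mapsto\psi(e^y)-\psi(2e^y)$, which is nonnegative, continuous, vanishes at infinity, and satisfies $\int_0^\infty(\psi(e^y)-\psi(2e^y))\mathrm{d}y=\int_1^2\psi(u)\mathrm{d}u/u<\infty$, hence is directly Riemann integrable. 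The key renewal theorem then yields $\sup_t\me V_t<\infty$, and conditional Chebyshev delivers the desired tightness.

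For the second family I would use the coupling $\widehat S_k=\widehat S_0+S_k$ (with $\widehat S_0$ independent of $(S_k)$), under which $\widehat C(t)=C(t-\widehat S_0)$ on $\{\widehat S_0\le t\}$. The decomposition
$$C(t)-C(t-h)=\sum_{k:\,t-h<S_k\le t}\varphi(t-S_k)-\sum_{k:\,S_k\le t-h}\bigl(\varphi(t-h-S_k)-\varphi(t-S_k)\bigr)$$
splits the difference into a boundary contribution bounded by $N(t)-N(t-h)$ (whose expectation is bounded in $t$ for fixed $h$) and a nonnegative bulk term whose expectation converges by the key renewal theorem to $\mu^{-1}\int_0^h\varphi(u)\mathrm{d}u<\infty$; the directly Riemann integrable function here is $u\mapsto\varphi(u)-\varphi(u+h)$ on $[0,\infty)$. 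This yields $\sup_{t\ge h}\me|C(t)-C(t-h)|\le M(h)$ with $h\mapsto M(h)$ locally bounded. Since $\widehat S_0<\infty$ a.s., for any $\varepsilon>0$ one picks $h_0$ with $\mmp\{\widehat S_0>h_0\}<\varepsilon/2$ and concludes via Markov's inequality applied conditionally on $\widehat S_0$.

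The main technical point is the two direct Riemann integrability checks needed to invoke the key renewal theorem. Both follow from the elementary representation $\psi(u)-\psi(cu)=\int_u^{cu}\me[(1-W)e^{-v(1-W)}]\mathrm{d}v$ (and its analogue for $\varphi$), which provides the required decay of the integrands at infinity. Beyond this, the proof is bookkeeping with \eqref{yyy} and the independence of $\widehat S_0$ from $(S_k)$.
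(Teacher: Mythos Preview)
Your proposal is correct. For the first family you take a cleaner and more self-contained route than the paper: the paper writes $L(t)-L^\ast(t)$ as the difference of two tight pieces, one of which is $K(t)-\me(K(t)\mid(W_k))$ (whose tightness is quoted from \cite{GneIksMar}) and the other is $Y(t)=\sum_{k\ge1}(1-\exp(-tP_k))1_{\{S_{k-1}>\log t\}}$; you instead interpolate through $\widetilde L(t)$ and handle $\widetilde L(t)-L^\ast(t)$ by a direct conditional-variance computation. Your argument avoids both the external citation and the separate treatment of the overshoot term $Y(t)$, at the cost of a slightly more careful dRi check. For the second family the two approaches are essentially the same decomposition into a boundary piece controlled by $N(t)-N(t-h)$ and a bulk piece; the paper keeps $\widehat S_0$ inside the sum and bounds the bulk by $-\varphi'(t-\widehat S_k)\,\widehat S_0 e^{\widehat S_0}$ together with the exact stationary identity $\me\widehat N(t)=\mu^{-1}t$, whereas you fix $h$, use the key renewal theorem for the non-stationary process, and integrate over the law of $\widehat S_0$ at the end.

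One small point: your justification of direct Riemann integrability (``decay of the integrands at infinity'') is not sufficient as stated; integrability plus decay does not imply dRi. The clean criterion, used in the paper (and attributed to \cite{Durr}), is that a nonnegative integrable $f$ on $[0,\infty)$ is dRi whenever $u\mapsto e^{-u}f(u)$ is nonincreasing. This applies to both of your functions: for $f(y)=\psi(e^y)-\psi(2e^y)$ and, more generally, for $f(u)=\varphi(u)-\varphi(u+h)=\psi(e^u)-\psi(e^h e^u)$, one checks that $x^{-1}(\psi(x)-\psi(cx))$ is nonincreasing in $x>0$ for every $c\ge1$, which reduces to the elementary fact that $w\mapsto(w+1)e^{-w}$ is nonincreasing on $[0,\infty)$. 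With this in place, your key renewal applications go through and the remainder of your argument is sound.
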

\begin{proof}
Set $M(t):=M_{\pi_t}$ and $K(t):=K_{\pi_t}$. These are the index
of the last occupied box and the number of occupied boxes in the
poissonized version of the Bernoulli sieve, respectively. Clearly,
$L(t)=M(t)-K(t)$.

\noindent {\sc Fact 1}: The family $\big(M(t)-N(\log t))_{t\geq
1}$ is tight.

We use the representation $M(t)=N(|\log U_{1,\pi_t}|)$, where
$U_{1,n}:=\underset{1\leq j\leq n}{\min}\, U_j$. It is well-known
that $|\log U_{1,n}|-\log n \dod G$, $n\to\infty$, where $G$ is a
random variable with the standard Gumbel distribution. Since
$(\pi_t)$ is independent of $U_{1,n}$ we also have $|\log
U_{1,\pi_t}|-\log \pi_t \dod G$, $t\to\infty$. By noting that
$\log \pi_t-\log t \tp 0$, $t\to\infty$ we finally conclude that
$|\log U_{1,\pi_t}|-\log t \dod G$, $t\to\infty$. Using
\eqref{yyy} along with independence of $\big(N(x)\big)$ and
$U_{1,\,\pi_t}$ we obtain
\begin{eqnarray*}
M(t)-N(\log t)&\leq& \big(N(|\log U_{1,\,\pi_t}|)-N(\log
t))\big)1_{\{|\log U_{1,\,\pi_t}|\geq \log
t\}}\\&\overset{d}{\leq}& N(|\log U_{1,\,\pi_t}|-\log
t\big)1_{\{|\log U_{1,\,\pi_t}|\geq \log t\}} \ \dod \
N(G)1_{\{G\geq 0\}}, \ \ t\to\infty.
\end{eqnarray*}
Similarly
\begin{eqnarray*}
M(t)-N(\log t)&\geq& -\big(N(\log t)-N(|\log
U_{1,\pi_t}|)\big)1_{\{|\log U_{1,\pi_t}|< \log
t\}}\\&\overset{d}{\geq}& -N\big(\log t-|\log
U_{1,\pi_t}|\big)1_{\{|\log U_{1,\pi_t}|< \log t\}} \ \dod \
-N(-G)1_{\{G<0\}}, \ \ t\to\infty.
\end{eqnarray*}

\noindent {\sc Fact 2}: The family $\big(K(t)-\me
(K(t)|(W_k))\big)_{t\geq 0}$ is tight.

This was proved in formula (28) in \cite{GneIksMar}.

\noindent {\sc Fact 3}: The family $$\bigg(L(t)-N(\log
t)+\sum_{k\geq
1}\big(1-\exp(-te^{-S_{k-1}}(1-W_k))\big)\bigg)_{t\geq 1}$$ is
tight.

Since $$\me (K(t)|(W_k))=\sum_{k\geq
1}\big(1-e^{-tP_k}\big)=\sum_{k\geq
1}\big(1-\exp(-te^{-S_{k-1}}(1-W_k))\big),$$ Fact 1 and Fact 2
together imply the statement.

\noindent {\sc Fact 4}: The family $\big(Y(t)\big)_{t\geq 1}$,
where
$$Y(t):=\sum_{k\geq
1}\big(1-\exp(-te^{-S_{k-1}}(1-W_k))\big)1_{\{S_{k-1}>\log t\}},$$
is tight.

Since $1-\varphi$ is monotone and integrable on $(-\infty,0]$, it
is directly Riemann integrable on $(-\infty,0]$. Hence, by the key
renewal theorem (see Theorem 4.2 in \cite{Ath})
$$\me Y(e^t)=\me \sum_{k\geq 0}\big(1-\varphi(t-S_{k})\big)1_{\{S_k>t\}} \ \to \
\mu^{-1}\int_{[0,1]}\big(1-\psi(y)\big)y^{-1}{\rm d}y<\infty,$$
and Fact 4 follows.

Now we are ready to prove the lemma. Since
$$L(t)-L^\ast(t)=\bigg(L(t)-N(\log t)+\sum_{k\geq
1}\big(1-\exp(-te^{-S_{k-1}}(1-W_k))\big)\bigg)-Y(t),$$ the first
assertion of the lemma follows from Fact 3 and Fact 4.

In view of the inequality
\begin{eqnarray*}
-\bigg(\varphi(t-\widehat{S}_k)-\varphi(t-S_k)\bigg)1_{\{\widehat{S}_k
\leq t\}}&\leq& \varphi(t-S_k)1_{\{S_k\leq
t\}}-\varphi(t-\widehat{S}_k)1_{\{\widehat{S}_k\leq
t\}}\\&=&\varphi(t-S_k)1_{\{S_k\leq t<\widehat{S}_k\}}\\&-&
\bigg(\varphi(t-\widehat{S}_k)-\varphi(t-S_k)\bigg)1_{\{\widehat{S}_k
\leq t\}}\\&\leq& \varphi(t-S_k)1_{\{S_k\leq
t<\widehat{S}_k\}}\\&=&\varphi(t-S_k)1_{\{S_k\leq t,
\widehat{S}_0>t\}}\\&+&\varphi(t-S_k)1_{\{t-\widehat{S}_0< S_k\leq
t, \widehat{S}_0\leq t\}} \ \ \text{a.s.},
\end{eqnarray*}
to prove the second assertion it suffices to check the tightness
of
$$\mathcal{C}_1:=\bigg(\sum_{k\geq 0}\varphi(t-S_k)1_{\{S_k\leq
t<\widehat{S}_k\}}\bigg)_{t\geq 0} \ \ \text{and} \ \
\mathcal{C}_2:=\bigg(\sum_{k\geq
0}\bigg(\varphi(t-\widehat{S}_k)-\varphi(t-S_k)\bigg)1_{\{\widehat{S}_k
\leq t\}}\bigg)_{t\geq 0}.$$ Using \eqref{yyy} gives
$$\sum_{k\geq 0}\varphi(t-S_k)1_{\{t-\widehat{S}_0< S_k\leq t,\, \widehat{S}_0\leq
t\}}\leq
\varphi(0)\big(N(t)-N(t-\widehat{S}_0)\big)1_{\{\widehat{S}_0\leq
t\}}\overset{d}{\leq} \varphi(0)N(\widehat{S}_0).$$ It is clear
that $$\bigg(\sum_{k\geq 0}\varphi(t-S_k)1_{\{S_k\leq
t\}}\bigg)1_{\{\widehat{S}_0>t\}} \ \tp \ 0, \ \ t\to\infty,$$ and
the tightness of $\mathcal{C}_1$ follows. Using the mean value
theorem for differentiable functions and the monotonicity of
$\psi^\prime$ we obtain
$$\bigg(\varphi(t-\widehat{S}_k)-\varphi(t-S_k)\bigg)1_{\{\widehat{S}_k
\leq t\}}\leq
e^{t-S_k}(-\psi^\prime(e^{t-\widehat{S}_k}))1_{\{\widehat{S}_k\leq
t\}}\widehat{S}_0=-\varphi^\prime(t-\widehat{S}_k)1_{\{\widehat{S}_k\leq
t\}}\widehat{S}_0e^{\widehat{S}_0}.$$ Since $$\me\sum_{k\geq
0}(-\varphi^\prime(t-\widehat{S}_k))1_{\{\widehat{S}_k\leq
t\}}=\mu^{-1}\int_{[0,t]}(-\varphi^\prime(y)){\rm d}y \ \to \
\mu^{-1}\varphi(0), \ \ t\to\infty,$$ the family $\mathcal{C}_2$
is tight. The proof is complete.
\end{proof}
Further we need a preliminary result which establishes a weak law
of large numbers for $C(t)$.
\begin{lemma}\label{aux}
Suppose $\mu<\infty$, $\nu=\infty$, and the distribution of $|\log
W|$ is non-lattice. Then
$${C(t)\over k(t)} \ \tp \ \mu^{-1}, \ \ t\to\infty,$$
where $$k(x):=\int_0^x \varphi(y){\rm d}y, \ \ x>0.$$
\end{lemma}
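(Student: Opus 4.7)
The plan is to establish a weak law of large numbers for $C(t)/k(t)$ by passing to the stationary shot-noise $\widehat{C}(t)$ and applying Chebyshev's inequality.

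First I would verify that $\nu=\infty$ forces $k(t)\to\infty$: the substitution $s=e^y$ converts $k(t)$ into $\int_1^{e^t}\psi(s)/s\,ds$, and Fubini identifies $\int_1^\infty\psi(s)/s\,ds=\me\int_{1-W}^\infty e^{-u}/u\,du$, whose integrand behaves like $|\log(1-W)|$ as $W\uparrow 1$, so the integral diverges precisely when $\nu=\infty$. Lemma~\ref{au} then gives $(C(t)-\widehat{C}(t))/k(t)\tp 0$, reducing matters to proving $\widehat{C}(t)/k(t)\tp\mu^{-1}$. The mean is exact: Campbell's formula together with the fact that $\widehat{N}$ has intensity measure $\mu^{-1}\,dx$ yields $\me\widehat{C}(t)=\mu^{-1}k(t)$.

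The core task is the variance estimate ${\rm Var}(\widehat{C}(t))=o(k(t)^2)$. A diagonal/off-diagonal decomposition of $\me\widehat{C}(t)^2$, combined with conditioning on $\widehat{S}_k$ and the fact that $(\widehat{S}_l-\widehat{S}_k)_{l>k}$ is distributed as $(S_m)_{m\geq 1}$ independently of $\widehat{S}_k$, plus a second use of Campbell's formula, yields
$$\me\widehat{C}(t)^2=2\mu^{-1}\int_0^t\varphi(y)\,\me C(y)\,dy-\mu^{-1}\int_0^t\varphi(y)^2\,dy.$$
Subtracting $(\me\widehat{C}(t))^2=\mu^{-2}k(t)^2$ and invoking the identity $2\int_0^t\varphi(y)k(y)\,dy=k(t)^2$ reduces the variance bound to the asymptotic $\me C(y)=\mu^{-1}k(y)+o(k(y))$ as $y\to\infty$.

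The main obstacle is this last asymptotic, since $\varphi$ is not directly Riemann integrable on $[0,\infty)$ when $\nu=\infty$ and the standard key renewal theorem does not apply. I would prove it by discretization: sandwich the monotone $\varphi$ between its step functions at integer points so that $\me C(y)=\int_0^y\varphi(u)\,d[U(y)-U(y-u)]$ is bounded above and below by sums $\sum_j\varphi(j)[U(y-j)-U(y-j-1)]$, and split each such sum at $j=y-T$. Blackwell's renewal theorem (available because the law of $|\log W|$ is non-lattice and $\mu<\infty$) gives $U(y-j)-U(y-j-1)\to\mu^{-1}$ uniformly for $j\le y-T$, so the main block is asymptotic to $\mu^{-1}\sum_{j\le y-T}\varphi(j)\sim\mu^{-1}k(y)$, using $|k(y)-k(y-T)|\le T\varphi(0)$ while $k(y)\to\infty$; the tail telescopes to a sum bounded by $\varphi(y-T)U(T)=o(k(y))$ because $\varphi(y)\to 0$ and $k(y)\to\infty$. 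Sending first $y\to\infty$ and then $T\to\infty$ yields the claim, and Chebyshev's inequality completes the proof.
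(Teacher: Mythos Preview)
Your argument is correct, and it follows the same overall strategy as the paper---Chebyshev's inequality after showing $\me C(t)^2\sim(\me C(t))^2\sim\mu^{-2}k(t)^2$---but the execution is genuinely different. The paper works with the non-stationary $C(t)$ throughout: it obtains $\me C(t)\sim\mu^{-1}k(t)$ from Sgibnev's renewal lemma (Lemma~\ref{sg}(a)), derives the second-moment identity $\me C^2(t)=2\int_{[0,t]}\varphi(t-x)\me C(t-x)\,d\me N(x)-\int_{[0,t]}\varphi^2(t-x)\,d\me N(x)$ from the one-step recursion $C(t)=\varphi(t)+C'(t-S_1)1_{\{S_1\leq t\}}$, and then appeals to Lemma~\ref{sg}(b) (applied to $\varphi k$ written as a difference of nondecreasing functions) to get $\int_{[0,t]}\varphi(t-x)k(t-x)\,d\me N(x)\sim(2\mu)^{-1}k(t)^2$. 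You instead transfer the problem to $\widehat{C}(t)$ via Lemma~\ref{au}, which buys you the exact identity $\me\widehat{C}(t)=\mu^{-1}k(t)$ and, after the diagonal/off-diagonal decomposition, turns the renewal-convolution integrals into ordinary Lebesgue integrals against $\mu^{-1}\,dy$; this sidesteps Lemma~\ref{sg}(b) entirely and reduces the whole variance estimate to the single first-moment asymptotic $\me C(y)\sim\mu^{-1}k(y)$, which you recover by a direct Blackwell-based discretization (essentially an ad hoc proof of Lemma~\ref{sg}(a) for monotone $\varphi$). Your route is slightly more self-contained and avoids the appeal to the external Sgibnev result; the paper's route is more uniform in that it never needs the tightness of $C-\widehat{C}$ for this lemma.
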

\begin{proof}
The assumption $\nu=\infty$ is equivalent to $\lix k(x)=\infty$.
%In view of Lemma \ref{221}, it suffices to show
%that\footnote{Alternatively, we could have proved that $\lit
%\widehat{C}(t)k^{-1}(t)=\mu^{-1}$ in probability. This, however,
%would not have led to essential simplification.}
%\begin{equation}\label{1212}
%{C(t)\over k(t)} \ \tp \ \mu^{-1}, \ \ t\to\infty.
%\end{equation}
In view of Chebyshev's inequality it is enough to check that
$$\me C^2(t)\ \sim \ (\me C(t))^2 \ \sim \ \mu^{-2}k^2(t), \ \
t\to\infty.$$ By Lemma \ref{sg}(a), the required asymptotics of
$\me C(t)$ follows easily. Using the equality
$$C(t)=\varphi(t)+C^\prime(t-S_1)1_{\{S_1\leq t\}} \ \
\text{a.s.},$$ where $C^\prime(t):=\sum_{k\geq
1}\varphi(t-S_k+S_1)1_{\{S_k-S_1\leq t\}}\od C(t)$ is independent
of $S_1$, we have
\begin{equation}\label{1111}\me C^2(t)=2\int_{[0,\,t]} \varphi(t-x)\me C(t-x){\rm d}\me N(x)-
\int_{[0,\,t]} \varphi^2(t-x){\rm d}\me N(x).
\end{equation}
The second term exhibits the following asymptotics
\begin{equation}\label{1112}
\int_{[0,\,t]} \varphi^2(t-x){\rm d}\me N(x)=o(k(t)), \ \
t\to\infty.
\end{equation}
To see this, use the key renewal theorem in the case
$\int_{[0,\,\infty)}\varphi^2(x){\rm d}x<\infty$ or Lemma
\ref{sg}(a) followed by l'H\^{o}pital rule in the case $\lit
\int_{[0,\,t]}\varphi^2(x){\rm d}x=\infty$.

%A little trouble which precludes direct application of Lemma
%\ref{sg} to the first summand in \eqref{1111} is that it is not
%clear whether the function $t\mapsto \me C(t)$ is monotone. One
%way to overcome this is as follows.
Since both $k(t)$ and $(1-\varphi(t))k(t)$ are nondecreasing
functions we apply Lemma \ref{sg}(b) to obtain, as $t\to\infty$,
\begin{equation}
\int_{[0,\,t]} \varphi(t-x)k(t-x){\rm d}\me N(x) \ \sim \
\mu^{-1}\int_{[0,\,t]}\varphi(x)k(x){\rm
d}x=(2\mu)^{-1}k^2(t).\label{1}
\end{equation}
Further, for fixed $a\in (0,t)$
\begin{equation}\label{11}
\int_{[t-a,\, t]} \varphi(t-x)k(t-x){\rm d}\me N(x)\leq
k(a)\big(\me N(t)-\me N(t-a)\big)\leq k(a)\me N(a)
\end{equation}
in view of \eqref{yyy}. Hence
$$\int_{[0,\,t]} \varphi(t-x)k(t-x){\rm d}\me N(x) \ \sim \
\int_{[0,\,t-a]} \varphi(t-x)k(t-x){\rm d}\me N(x), \ \
t\to\infty.$$ Likewise, since $$\underset{x\in [0,\,a]}{\sup}\,\me
C(x) <\infty,$$ we conclude that
\begin{equation}\label{111}
\int_{[t-a,\, t]} \varphi(t-x)\me C(t-x){\rm d}\me N(x)\leq
\underset{x\in [0,a]}{\sup}\,\me C(x)\me N(a)<\infty.
\end{equation}
Now we are ready to derive the asymptotics of $\me C^2(t)$. For
any $\varepsilon\in (0,\mu^{-1})$ there exists $x_0>0$ such that
$\mu^{-1}-\varepsilon\leq \me C(y)/k(y)\leq \mu^{-1}+\varepsilon$
for $y\geq x_0$. With this $x_0$ we have
\begin{eqnarray*}
\int_{[0,\,t]}\varphi(t-x)\me C(t-x){\rm d}\me N(x)&\leq&
(\mu^{-1}+\varepsilon)\int_{[0,\,t-x_0]}\varphi(t-x)k(t-x){\rm
d}\me N(x)\\&+& \int_{[t-x_0,\,t]}\varphi(t-x)\me C(t-x){\rm d}\me
N(x)\\&\overset{\eqref{11},\eqref{111}}{\sim}&
(\mu^{-1}+\varepsilon)\int_{[0,\,t]}\varphi(t-x)k(t-x){\rm d}\me
N(x)+O(1)\\&\overset{\eqref{1}}{\sim}&
(\mu^{-1}+\varepsilon)(2\mu)^{-1} k^2(t).
\end{eqnarray*}
Sending $\varepsilon \to 0$ and recalling \eqref{1111} and
\eqref{1112} we conclude that
$$\underset{t\to\infty}{\lim\sup}{\me C^2(t)\over k^2(t)}\leq
\mu^{-2}.$$ Arguing similarly we obtain the converse inequality
for the lower limit. The proof is complete.
\end{proof}
\begin{lemma}\label{main6}
Suppose $\mu<\infty$, $\nu=\infty$, and the distribution of $|\log
W|$ is non-lattice. Then
\begin{equation*}
{L(t)-C(\log t)\over \sqrt{\mu^{-1}k(\log t)}}\ \dod \
{\mathcal{N}}(0, 1), \ \ t\to\infty.
\end{equation*}
\end{lemma}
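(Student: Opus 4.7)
The plan is to reduce the claim to a martingale CLT for the ``environment'' fluctuation $L^\ast(t)-C(\log t)$. By Lemma~\ref{au}, $L(t)-L^\ast(t)$ is tight, and since $\nu=\infty$ forces $k(\log t)\to\infty$, this difference is $o_{\mmp}(\sqrt{k(\log t)})$. Hence it suffices to show
$$\frac{L^\ast(t)-C(\log t)}{\sqrt{\mu^{-1}k(\log t)}}\,\dod\,\mathcal{N}(0,1).$$
With $\mathcal{G}_k:=\sigma(W_1,\ldots,W_k)$, the independence of $W_k$ from $\mathcal{G}_{k-1}$ together with $\psi(s)=\me e^{-s(1-W)}$ gives $\me\bigl[\exp(-te^{-S_{k-1}}(1-W_k))\mid\mathcal{G}_{k-1}\bigr]=\psi(te^{-S_{k-1}})$. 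Consequently the variables $X_k:=\exp(-te^{-S_{k-1}}(1-W_k))-\psi(te^{-S_{k-1}})$ form a martingale difference sequence with respect to $(\mathcal{G}_k)$, and after reindexing $j=k-1$ in the sum defining $C(\log t)$ one obtains
$$L^\ast(t)-C(\log t)=\sum_{k\geq 1}X_k\,1_{\{S_{k-1}\leq\log t\}}.$$

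To this martingale I would apply the standard array CLT. Since $|X_k|\leq 1$ and $\sqrt{\mu^{-1}k(\log t)}\to\infty$, the normalized summands are uniformly negligible and the conditional Lindeberg condition is automatic. What remains is to prove that the predictable quadratic variation
$$V_t:=\sum_{k\geq 1}g(\log t-S_{k-1})\,1_{\{S_{k-1}\leq\log t\}},\qquad g(y):=\psi(2e^y)-\psi(e^y)^2,$$
satisfies $V_t/(\mu^{-1}k(\log t))\,\tp\,1$. This is the heart of the proof.

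For this last step I would mimic the argument in Lemma~\ref{aux}. First, $\me V_t=\int_{[0,\log t]}g(\log t-x)\,d\me N(x)$, and Lemma~\ref{sg}(b) yields $\me V_t\sim\mu^{-1}\int_0^{\log t}g(y)\,dy$. To compare the latter with $k(\log t)$, observe that the change of variable $z=y+\log 2$ gives $\int_0^x\psi(2e^y)\,dy=k(x+\log 2)-k(\log 2)\sim k(x)$, since $\varphi$ is nonincreasing with $\varphi(y)\to 0$ as $y\to\infty$ and $k(x)\to\infty$. Splitting $\int_0^x\varphi^2(y)\,dy$ at an arbitrarily large threshold $A$ and bounding the tail by $\varphi(A)k(x)$ gives $\int_0^x\varphi^2(y)\,dy=o(k(x))$. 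Combining these two estimates, $\int_0^x g(y)\,dy\sim k(x)$, whence $\me V_t\sim\mu^{-1}k(\log t)$. A second-moment calculation for $V_t$ modeled on the one carried out in Lemma~\ref{aux} shows $\me V_t^2\sim(\me V_t)^2$, so Chebyshev gives the desired convergence in probability, and the martingale array CLT then completes the proof. The main obstacle is the asymptotic identification $\int_0^x g(y)\,dy\sim k(x)$ together with the second-moment control of $V_t$, both of which must be carried out without any regularity assumption on $\psi$ beyond $\nu=\infty$.
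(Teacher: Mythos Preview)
Your proposal is essentially the paper's proof: reduce via Lemma~\ref{au} to $L^\ast(t)-C(\log t)$, recognize this as a sum of bounded martingale differences with respect to $(\mathcal G_k)$, note that Lindeberg is automatic from $|X_k|\le 1$, and verify the conditional-variance condition so that the martingale array CLT applies. The one substantive difference is how you handle $V_t/(\mu^{-1}k(\log t))\tp 1$. You propose to treat $V_t$ as a fresh shot-noise with response $g$ and rerun the mean/second-moment scheme of Lemma~\ref{aux}; this is workable, but $g$ is not monotone, so the hypotheses of Lemma~\ref{sg}(b) need some extra decomposition when you redo the second-moment step. The paper avoids all of this by writing the conditional-variance sum (with $t$ in place of your $\log t$) as
\[
\frac{C(t)}{\mu^{-1}k(t)}-\frac{\int_{[0,t]}\big(\psi(e^{t-x})-\psi(2e^{t-x})\big)\,{\rm d}N(x)}{\mu^{-1}k(t)}-\frac{\int_{[0,t]}\varphi^2(t-x)\,{\rm d}N(x)}{\mu^{-1}k(t)},
\]
invoking Lemma~\ref{aux} directly for the first term and dispatching the two corrections via the key renewal theorem (the map $y\mapsto\psi(e^y)-\psi(2e^y)$ is directly Riemann integrable) and \eqref{1112}; no second-moment computation for $V_t$ is needed. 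One step you omit: the martingale array CLT is indexed by integers, so the paper first proves the convergence along $n\in\mn$ and then passes to continuous $t$ by showing that $k(\log t)/k(\log[t])\to 1$ and that the families $\big(C(t)-C([t])\big)$ and $\big(L^\ast(t)-L^\ast([t])\big)$ are tight.
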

\begin{proof}
By Lemma \ref{au}, it is enough to prove that
\begin{equation}\label{asy}
{L^\ast(t)-C(\log t)\over \sqrt{\mu^{-1}k(\log t)}}\ \dod \
{\mathcal{N}}(0, 1), \ \ t\to\infty.
\end{equation}
Set $$X_{ti}:={\big(\exp
\big(-te^{-S_{i-1}}(1-W_i)\big)-\psi(te^{-S_{i-1}})\big)1_{\{S_{i-1}\leq
\log t\}}\over \sqrt{\mu^{-1}k(\log t)}}, \ \ i\in\mn, \ t>1,$$
and note that $\me\big(X_{ti}|(W_k)_{k\leq i-1}\big)=0$. By a
martingale central limit theorem (Corollary 3.1 in \cite{Hall}),
relation
\begin{equation}\label{asy1}
{L^\ast(n)-C(\log n)\over \sqrt{\mu^{-1}k(\log n)}}\ \dod \
{\mathcal{N}}(0, 1), \ \ n\to\infty,
\end{equation}
which is just \eqref{asy} with continuous variable $t$ replaced by
integer $n$, will hold once we can show that
\begin{equation}\label{123}
\sum_{i\geq 1}\me (X_{ni}^2|(W_k)_{k\leq i-1}) \ \tp \ 1, \ \
n\to\infty,
\end{equation}
and that, for all $\varepsilon>0$,
\begin{equation}\label{124}
\sum_{i\geq 1}\me
(X_{ni}^21_{\{|X_{ni}|>\varepsilon\}}|(W_k)_{k\leq i-1}) \ \tp \
0, \ \ n\to\infty.
\end{equation}
It suffices to establish \eqref{123}, as, in view of $|X_{ni}|\leq
1/\sqrt{\mu^{-1} k(\log n)}$, \eqref{124} will follow from it.

We have
\begin{eqnarray*}
\sum_{i\geq 1}\me (X_{e^ti}^2|(W_k)_{k\leq
i-1})&=&{\int_{[0,t]}\big(\psi(2e^{t-x})-\varphi^2(t-x)\big){\rm
d}N(x)\over \mu^{-1} k(t)}\\&=&{C(t)\over
\mu^{-1}k(t)}-{\int_{[0,t]}\big(\psi(e^{t-x})-\psi(2e^{t-x})\big){\rm
d}N(x)\over \mu^{-1} k(t)}-{\int_{[0,t]}\varphi^2(t-x){\rm
d}N(x)\over \mu^{-1} k(t)}.
\end{eqnarray*}
By Lemma \ref{aux},
$\underset{t\to\infty}{\lim}\,C(t)/(\mu^{-1}k(t))=1$ in
probability. To complete the proof of \eqref{123} one has to check
that the second and the third terms converge to zero in
probability. For the third this follows from \eqref{1112} and
Markov's inequality. The function $t\mapsto \psi(e^t)-\psi(2e^t)$
is directly Riemann integrable on $\mr$ since it is nonnegative
and integrable, and the function $t\mapsto
e^{-t}\big(\psi(e^t)-\psi(2e^t)\big)$ is nonincreasing (see, for
instance, the proof of Corollary 2.17 in \cite{Durr}). By the key
renewal theorem
\begin{eqnarray*}
\me \int_{[0,t]}\big(\psi(e^{t-x})-\psi(2e^{t-x})\big){\rm
d}N(x)&\leq& \me
\int_{[0,\infty)}\big(\psi(e^{t-x})-\psi(2e^{t-x})\big){\rm
d}N(x)\\ &\to& \mu^{-1}\me
\int_{[0,\infty]}\big(e^{-y(1-W)}-e^{-2y(1-W)}\big)y^{-1}{\rm
d}y\\&=&\mu^{-1}\log 2, \ \ t\to\infty,
\end{eqnarray*}
which proves the required result for the second term.

It remains to pass from \eqref{asy1} to \eqref{asy}. We first note
that the function $k(\log t)$ is slowly varying at $\infty$. This
follows from the equality $k(\log
t)=\int_{[1,\,t]}\psi(y)y^{-1}{\rm d}y$ and the representation
theorem for slowly varying functions (Theorem 1.3.1 in
\cite{BGT}). To prove that $\underset{t\to\infty}{\lim}{k(\log
t)\over k(\log [t])}=1$, where $[t]$ denotes the integer part of
$t$, use the slow variation of $k(\log t)$ together with the
monotonicity to conclude
$$ 1\leq{k(\log t)\over k(\log [t])}\leq {k(\log ([t]+1))\over k(\log [t])} \ \to \ 1, \ \ t\to\infty.$$
Now we intend to prove the tightness of the family
$\big(C(t)-C([t])\big)$. To this end we use the equality
$$C([t])-C(t)=\int_{[0,[t]]}\big(\varphi([t]-x)-\varphi(t-x)\big){\rm d}N(x)-\int_{[[t],t]}\varphi(t-x){\rm d}N(x).$$
By the mean value theorem
\begin{eqnarray*}
\varphi([t]-x)-\varphi(t-x)&=& -\varphi^\prime(\theta)(t-[t])\leq
e^\theta (-\psi^\prime(\theta))\\&\leq&
e^{t-x}(-\psi^\prime([t]-x))= -\varphi^\prime([t]-x)e^{t-[t]}\leq
-\varphi^\prime([t]-x)e,
\end{eqnarray*}
where $\theta$ is some value from $[[t]-x, t-x]$. Consequently,
$$C([t])-C(t)\leq e \int_{[0,[t]]}(-\varphi^\prime([t]-x)){\rm
d}N(x),$$ and the right-hand side is bounded in probability as, by
the key renewal theorem, its expectation goes to
$e\psi(1)\mu^{-1}$ (the function $t\mapsto -\varphi^\prime(t)$ is
directly Riemann integrable on $\mr^+$ since it is integrable on
$\mr^+$ and nonnegative, and $t\mapsto
e^{-t}(-\varphi^\prime(t))=-\psi^\prime(e^t)$ is a nonincreasing
function). On the other hand, in view of \eqref{yyy}
$$C([t])-C(t)\geq -\int_{[[t],t]}\varphi(t-x){\rm d}N(x)\geq -(N(t)-N([t]))\overset{d}{\geq}
-N(1).$$ Finally we want to show that the family
$\big(L^\ast(t)-L^\ast([t])\big)$ is tight. By Lemma \ref{au}, it
is enough to check the tightness of $\big(L(t)-L([t])\big)$. Since
$L(t)-L([t])$ represents the fluctuation of the number of empty
boxes after throwing $\pi_t-\pi_{[t]}$ balls, and the latter
variable is bounded from above by a Poisson variable with mean
one, the desired tightness follows. The proof of the lemma is
complete.
\end{proof}

\noindent {\sc Step 2}. The purpose of this step is investigating
convergence in distribution of $C(t)$. The cases (a), (b1) and
(c1) and the cases (b2) and (c2) are treated separately in Lemma
\ref{222} and Lemma \ref{22}, respectively.

In what follows we use the following notation. If
$\sigma^2<\infty$ we denote by $Z(\cdot)$ the Brownian motion and
set $g(t):=\sqrt{\sigma^2\mu^{-3}t}$. If condition \eqref{domain0}
holds we denote by $Z(\cdot)$ the Brownian motion and let $g(t)$
be any nondecreasing function such that $g(t)\sim \mu^{-3/2}c(t)$,
$t\to\infty$. If condition \eqref{domain1} holds we denote by
$Z(\cdot)$ the $\alpha$-stable L\'{e}vy process such that $Z(1)$
has characteristic function \eqref{st1}, and let $g(t)$ be any
nondecreasing function such that $g(t) \sim
\mu^{-1-1/\alpha}c(t)$, $t\to\infty$.

It is well-known that under either of the conditions of the
preceding paragraph, i.e. whenever the law of $|\log W|$ belongs
to the domain of attraction of an $\alpha$-stable law, $\alpha\in
(1,2]$, $${S_{[t\cdot]}-\mu (t\cdot)\over {\rm const}\,g(t)} \
\Rightarrow \ -Z(\cdot), \ \ t\to\infty$$ in $D:=D[0,1]$ under the
$J_1$ topology. While the one-dimensional convergence is a
classical result \cite{Gne}, the functional version is due to
Skorohod (Theorem 2.7 in \cite{Skor}). Since
$$\underset{u\in
[0,1]}{\sup}|\widehat{S}_{[tu]}-S_{[tu]}|=\widehat{S}_0,$$ the
same functional limit theorem as above also holds for
$\widehat{S}_{[t\cdot]}$. An appeal to Theorem 13.7.1 in
\cite{Whitt2} allows us to conclude that\footnote{According to
Theorem 1b in \cite{Bing73}, relation \eqref{16} also holds for
the non-stationary renewal process $(N(t))_{t\geq 0}$. Since our
argument imitates one given in \cite{Bing73}, we omit details.}
\begin{equation}\label{16}
W_t(\cdot):={\widehat{N}(t\cdot)-\mu^{-1}(t\cdot)\over g(t)} \
\Rightarrow \ Z(\cdot), \ \ t\to\infty,
\end{equation}
in $D$ under the $M_1$-topology. Certainly, \eqref{16} entails the
one-dimensional convergence $W_t(1)\Rightarrow Z(1)$,
$t\to\infty$. Hence, by Skorohod's representation theorem there
exist versions\newline $\bar{W}_t(1)\od W_t(1)$ and $\bar{Z}(1)\od
Z(1)$ such that
\begin{equation*}
\bar{W}_t(1) \ \to \ \bar{Z}(1), \ \ t\to\infty,
\end{equation*}
almost surely. In particular, for any $\varepsilon>0$ there exists
an a.s. finite $T>0$ such that
\begin{equation}\label{xzx}
|\bar{W}_v(1)|\leq |\bar{Z}(1)|+\varepsilon \ \ \text{for all} \ \
v\geq T.
\end{equation}
For multiple later use let us write the following estimate: for
any positive $x(t)$ such that $\lit x(t)=\infty$ and any $a>0$
\begin{equation}\label{mul}
{\bigg|\int_{[0,\,at]}(\widehat{N}(v)-\mu^{-1}v){\rm
d}(-\varphi(v))\bigg|\over x(t)}\overset{d}{\leq}
o_P(1)+(|\bar{Z}(1)|+\varepsilon){\int_{[0,\,at]}g(v){\rm
d}(-\varphi(v))\over x(t)},
\end{equation}
where $o_P(1)$ denotes a term that converges to zero in
probability, as $t\to\infty$. This can be proved as follows:
\begin{eqnarray*}
\int_{[0,\,at]}(\widehat{N}(v)-\mu^{-1}v){\rm d}(-\varphi(v))&\od&
\int_{[0,\,at]}\bar W_v(1)g(v){\rm d}(-\varphi(v))
\nonumber\\&=&\int_{[0,\,at]}\ldots
1_{\{T>at\}}+\int_{[0,\,T]}\ldots 1_{\{T\leq
at\}}+\int_{[T,\,at]}\ldots 1_{\{T\leq
at\}}\nonumber\\&=:&I_1(t)+I_2(t)+I_3(t).
\end{eqnarray*}
It is plain that $\lit I_1(t)=0$ in probability. As to $I_2(t)$,
write
$${|I_2(t)|\over x(t)}\leq {\int_{[0,\,T]}\big|\bar{W}_v(1)\big|g(v){\rm
d}(-\varphi(v))\over x(t)} \ \tp \ 0, \ \ t\to\infty.$$ Finally
$${|I_3(t)|\over x(t)}\leq
{\int_{[T,\,at]}\big|\bar{W}_v(1)\big|g(v){\rm
d}(-\varphi(v))\over x(t)}1_{\{T\leq
at\}}\overset{\eqref{xzx}}{\leq}
(|\bar{Z}(1)|+\varepsilon){\int_{[0,\,at]}g(v){\rm
d}(-\varphi(v))\over x(t)}.$$
\begin{lemma}\label{222}
Let the assumptions of parts (a) or (b1), or (c1) of Theorem
\ref{main7} hold. Then
\begin{equation}\label{56565656}
{C(\log t)-\mu^{-1}k(\log t)\over \sqrt{k(\log t)}} \ \tp \ 0, \ \
t\to\infty,
\end{equation}
and
\begin{equation}\label{565656}
{L(t)-\mu^{-1} k(\log t)\over \sqrt{\mu^{-1}k(\log t)}} \ \dod \
\mathcal{N}(0,1), \ \ t\to\infty.
\end{equation}
\end{lemma}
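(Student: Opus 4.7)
The plan is to establish \eqref{56565656} first, and then read off \eqref{565656} via Slutsky's theorem: writing
\[
\frac{L(t)-\mu^{-1}k(\log t)}{\sqrt{\mu^{-1}k(\log t)}}=\frac{L(t)-C(\log t)}{\sqrt{\mu^{-1}k(\log t)}}+\frac{C(\log t)-\mu^{-1}k(\log t)}{\sqrt{\mu^{-1}k(\log t)}},
\]
the first summand converges in distribution to $\mathcal{N}(0,1)$ by Lemma \ref{main6}, while the second tends to $0$ in probability by \eqref{56565656}.

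For \eqref{56565656}, the first move is to replace $C$ by its stationary analogue $\widehat{C}$. By Lemma \ref{au} the family $C(T)-\widehat{C}(T)$ is tight (where I set $T=\log t$), and $k(T)\to\infty$ since $\nu=\infty$, so dividing by $\sqrt{k(T)}$ kills the difference. It therefore suffices to prove $(\widehat{C}(T)-\mu^{-1}k(T))/\sqrt{k(T)}\tp 0$. To analyze this I would write
\[
\widehat{C}(T)-\mu^{-1}k(T)=\int_{[0,T]}\varphi(T-x)\,{\rm d}\bigl(\widehat{N}(x)-\mu^{-1}x\bigr),
\]
change variables $v=T-x$, and integrate by parts to re-express the left-hand side as an integral of the centered renewal process against $d(-\varphi(v))$, so that inequality \eqref{mul} (already derived in the paper from the functional CLT \eqref{16} via Skorohod coupling) becomes applicable.

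Applying \eqref{mul} produces a stochastic bound essentially of the form $O_P\bigl(\varphi(T)g(T)+\int_{[0,T]}g(v)\,{\rm d}(-\varphi(v))\bigr)$, and the substantive work is to verify that this is $o\bigl(\sqrt{k(T)}\bigr)$ in each of the three regimes. The link is that by a Tauberian theorem for Laplace transforms the decay of $\varphi(v)=\psi(e^v)$ is determined by the tail of $|\log(1-W)|$, while the growth of $g$ is determined by the tail of $|\log W|$. The hypotheses in parts (a), (b1) and (c1) are precisely the smallness requirements on the $(1-W)$-tail relative to the $W$-tail that make the weighted integral of $g$ asymptotically negligible compared with $\sqrt{k(T)}$: in case (a), with $g(v)\sim{\rm const}\sqrt{v}$, the relevant computation reduces after one integration by parts to comparing $\int_0^T\varphi(v)^2\,{\rm d}v$ with $k(T)$, and the regular variation of $\varphi$ forces this ratio to $0$; in (b1) and (c1) the same scheme goes through once $g$ and $\varphi$ are expanded via their regular-variation asymptotics and Karamata-type estimates are invoked, the conditions \eqref{555} and \eqref{555555} being exactly what is needed. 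I expect the main obstacle to be this uniform bookkeeping across the three regimes, but no new machinery should be required beyond Lemmas \ref{au} and \ref{aux} together with estimate \eqref{mul}.
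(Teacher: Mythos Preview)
Your overall strategy coincides with the paper's: derive \eqref{565656} from \eqref{56565656} and Lemma~\ref{main6}, pass from $C$ to $\widehat{C}$ via Lemma~\ref{au}, rewrite the integral, integrate by parts, and apply \eqref{mul}. One clarification: your ``change of variables $v=T-x$'' is really a distributional step --- the paper writes $\int_{[0,t]}\varphi(t-v)\,{\rm d}\widehat N(v)\od\int_{[0,t]}\varphi(v)\,{\rm d}\widehat N(v)$, which uses stationarity of the increments of $\widehat N$, not a pathwise substitution. The paper then cites a separate Lemma~\ref{rela} for the two key estimates $g(t)\varphi(t)=o(\sqrt{k(t)})$ and $\int_{[0,t]}g(v)\,{\rm d}(-\varphi(v))=o(\sqrt{k(t)})$, which you instead sketch inline.

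Your case-(a) sketch, however, does not work as written. Under the sole hypothesis $\sigma^2<\infty$ there is no regular variation assumption on $\varphi$, so you cannot invoke it; and the relevant comparison is not with $\int_0^T\varphi^2$. For instance, if $\varphi(v)\sim 1/v$ then $\int_0^\infty\varphi^2<\infty$ while $k(T)\sim\log T$, and a Cauchy--Schwarz reduction to $\int\varphi^2$ leaves an uncontrolled $\log T$ factor. The argument in Lemma~\ref{rela} is elementary and needs no regular variation: for the boundary term, $g^2(t)\varphi^2(t)/k(t)={\rm const}\cdot t\varphi^2(t)/k(t)\le{\rm const}\,\varphi(t)\to 0$ because $k(t)\ge t\varphi(t)$ by monotonicity; for the integral term, after integrating by parts one applies l'H\^opital to $\bigl(\int_{[1,t]}\varphi(y)y^{-1/2}\,{\rm d}y\bigr)/\sqrt{k(t)}$ and obtains $2\sqrt{k(t)/t}\to 0$. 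It is only in cases (b1) and (c1) that the Tauberian link between the tail of $|\log(1-W)|$ and $\varphi$ is used, and there the hypotheses \eqref{555} and \eqref{555555} enter exactly as you describe.
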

\begin{proof}
We start by noting that relation \eqref{565656} is an immediate
consequence of \eqref{56565656} and Lemma \ref{main6}. By Lemma
\ref{au} relation \eqref{56565656} is equivalent to
$${\widehat{C}(t)-\mu^{-1}k(t)\over\sqrt{k(t)}} \ \tp \ 0, \ \
t\to\infty.$$ To prove it, we represent the latter ratio in a more
convenient form
\begin{eqnarray*}
{\widehat{C}(t)-\mu^{-1}k(t)\over\sqrt{k(t)}}&=&{\int_{[0,t]}\varphi(t-v){\rm
d}\widehat{N}(v)-\mu^{-1}k(t)\over\sqrt{k(t)}}\\ &\od&
{\int_{[0,t]}\varphi(v){\rm
d}\widehat{N}(v)-\mu^{-1}\int_{[0,t]}\varphi(v){\rm d}v
\over\sqrt{k(t)}}\\&=& {\varphi(t)(\widehat{N}(t)-\mu^{-1}t)\over
\sqrt{k(t)}}+{\int_{[0,t]}(\widehat{N}(v)-\mu^{-1}v){\rm
d}(-\varphi(v))\over \sqrt{k(t)}}\\&=& W_t(1){g(t)\varphi(t)\over
\sqrt{k(t)}}+{\int_{[0,t]}(\widehat{N}(v)-\mu^{-1}v){\rm
d}(-\varphi(v))\over \sqrt{k(t)}}.
\end{eqnarray*}
By Lemma \ref{rela}, $\lit {g(t)\varphi(t)\over \sqrt{k(t)}}=0$.
Since, in view of \eqref{16}, $W_t(1) \dod Z(1)$, $t\to\infty$, we
have
\begin{equation*}
W_t(1){g(t)\varphi(t)\over \sqrt{k(t)}} \ \tp \ 0, \ \ t\to\infty.
\end{equation*}
Use now inequality \eqref{mul} with $a=1$ and $x(t)=\sqrt{k(t)}$.
Since, by Lemma \ref{rela}, $\lit {\int_{[0,t]}g(v){\rm
d}(-\varphi(v))\over \sqrt{k(t)}}=0$ we conclude that
$${\int_{[0,t]}(\widehat{N}(v)-\mu^{-1}v){\rm d}(-\varphi(v))\over
\sqrt{k(t)}} \ \tp \ 0, \ \ t\to\infty.$$ The proof of the lemma
is complete.
\end{proof}
\begin{rem}
Set $$m(x):=\int_{[0,\,x]}\mmp\{|\log (1-W)|>y\}{\rm d}y, \ \
x>0.$$ Lemma 5.4 in \cite{slow} proves that $|m(x)-k(x)|$ is a
bounded function. This justifies the last sentence of Theorem
\ref{main7}. Also this implies that the normalization
$\sqrt{\mu^{-1}k(t)}$ ($\sqrt{\mu^{-1}k(\log t)}$) used in Lemma
\ref{aux}, Lemma \ref{main6} and Lemma \ref{222} can be safely
replaced by $\sqrt{\mu^{-1}m(t)}$ ($\sqrt{\mu^{-1}m(\log t)}$).
\end{rem}

\begin{lemma}\label{22}
{\rm (I)} Assume that $\sigma^2=\infty$ and that
$$\int_0^x y^2 \mmp\{|\log W|\in {\rm d}y\} \ \sim \ \widetilde{\ell}(x), \ \ x\to\infty,$$ for
some $\widetilde{\ell}$ slowly varying at $\infty$. Let $c(x)$ be
any positive function such that $\lix
{x\widetilde{\ell}(c(x))\over c^2(x)}=1$. Assume further that
$$\mmp\{|\log(1-W)|>x\} \ \sim \ x^{-\beta}\ell(x), \ \
x\to\infty,$$ for some $\beta \in [0,1/2)$ and some $\ell$ slowly
varying at $\infty$. Then
\begin{equation}\label{101}
{C(t)-\mu^{-1} k(t)\over \mu^{-3/2}c(t)\varphi(t)} \ \dod \
\int_{[0,1]}v^{-\beta}{\rm d}Z(v), \ \ t\to\infty,
\end{equation}
where $(Z(v))_{v\in [0,1]}$ is the Brownian motion.\newline {\rm
(II)} Assume that $$\mmp\{|\log W|>x\} \ \sim \
x^{-\alpha}\widetilde{\ell}(x), \ \ x\to\infty$$ for some
$\alpha\in (1,2)$ and some $\widetilde{\ell}$ slowly varying at
$\infty$. Let $c(x)$ be any positive function such that $\lix
{x\widetilde{\ell}(c(x))\over c^\alpha(x)}=1$. Assume further that
$$\mmp\{|\log(1-W)>x|\} \ \sim \ x^{-\beta}\ell(x), \ \
x\to\infty,$$ for some $\beta\in [0,1/\alpha)$ and some $\ell$
slowly varying at $\infty$. Then
\begin{equation}\label{102}
{C(t)-\mu^{-1}k(t)\over \mu^{-1-1/\alpha}c(t)\varphi(t)} \ \dod \
\int_{[0,1]}v^{-\beta}{\rm d}Z(v), \ \ t\to\infty,
\end{equation}
where $(Z(v))_{v\in [0,1]}$ is the $\alpha$-stable L\'{e}vy
process such that $Z(1)$ has characteristic function \eqref{st1}.
\end{lemma}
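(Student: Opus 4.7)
The plan is to follow the structure of the proof of Lemma \ref{222} but this time to track, rather than discard, the leading contribution of the renewal fluctuations. By Lemma \ref{au} together with the fact that $g(t)\varphi(t)\to\infty$ under both sets of hypotheses (since $c(t)$ is regularly varying of index $\geq 1/2$ and, as I will verify below, $\varphi(t)$ is regularly varying of index $-\beta>-1/\alpha$), it suffices to prove the analogous statement with $\widehat{C}(t)$ in place of $C(t)$. Exactly as in the proof of Lemma \ref{222}, reversibility of the stationary renewal process yields
$$\widehat{C}(t)-\mu^{-1}k(t)\ \od \ \int_{[0,t]}\varphi(v){\rm d}\big(\widehat{N}(v)-\mu^{-1}v\big),$$
and integration by parts expresses the right-hand side as $\varphi(t)(\widehat{N}(t)-\mu^{-1}t)+\int_{[0,t]}(\widehat{N}(v)-\mu^{-1}v)(-\varphi^\prime(v)){\rm d}v$ plus an a.s. bounded boundary contribution at $v=0$ which is negligible after normalization.

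The next ingredient is the regular variation of $\varphi$ at infinity. Writing $\varphi(y)=\mmp\{\log E+|\log(1-W)|>y\}$ for an independent unit exponential $E$ and combining the tail hypothesis on $|\log(1-W)|$ with the exponentially thin tail of $\log E$ yields $\varphi(y)\sim y^{-\beta}\ell(y)$. The monotone density theorem (applicable because $-\varphi^\prime$ is eventually monotone) then gives $-\varphi^\prime(y)\sim \beta y^{-\beta-1}\ell(y)$. Rescaling $v=ts$ and dividing by $g(t)\varphi(t)$, the previous decomposition becomes
$$\frac{\widehat{C}(t)-\mu^{-1}k(t)}{g(t)\varphi(t)}\ \od\ W_t(1)+\int_0^1 W_t(s)h_t(s){\rm d}s+o_P(1),\qquad h_t(s):=\frac{-t\varphi^\prime(ts)}{\varphi(t)},$$
with $h_t(s)\to \beta s^{-\beta-1}$ pointwise on $(0,1]$ and $h_t(s)\leq C s^{-\beta-1-\epsilon}$ uniformly in $t$ by Potter's bounds.

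To finish, I would invoke Skorohod's representation in conjunction with \eqref{16} to produce versions with $\bar{W}_t\to \bar{Z}$ almost surely in $D[0,1]$ (under $J_1$ in case (I), $M_1$ in case (II)), so that $\bar W_t(s)\to \bar Z(s)$ at every continuity point of $\bar Z$. Splitting the integral at a threshold $\delta>0$, dominated convergence on $[\delta,1]$ yields $\int_\delta^1 \bar{W}_t(s)h_t(s){\rm d}s\to \beta\int_\delta^1 s^{-\beta-1}\bar{Z}(s){\rm d}s$ almost surely for a.e.\ $\delta$, while the contribution from $[0,\delta]$ is controlled using the bound $\sup_{s\leq\delta}|\bar{W}_t(s)|=O_P(\delta^{1/\alpha-\epsilon})$ (inherited from the local behavior of $Z$ near $0$) together with Potter's bound on $h_t$, leaving an $O_P(\delta^{1/\alpha-\beta-\epsilon})\to 0$ remainder as $\delta\downarrow 0$. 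Passing to the limit and invoking the integration-by-parts formula recalled in the Remark following Theorem \ref{main7},
$$W_t(1)+\int_0^1 W_t(s)h_t(s){\rm d}s\ \dod\ Z(1)+\beta\int_0^1 s^{-\beta-1}Z(s){\rm d}s=\int_{[0,1]}v^{-\beta}{\rm d}Z(v),$$
which combined with $g(t)\sim\mu^{-3/2}c(t)$ in (I) and $g(t)\sim\mu^{-1-1/\alpha}c(t)$ in (II) yields \eqref{101} and \eqref{102}.

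The main obstacle is the uniform control of the integral near $s=0$: the limiting weight $\beta s^{-\beta-1}$ is non-integrable, and the truncation argument survives only because $\beta<1/\alpha$, which is precisely the range in which the limiting stochastic integral is well-defined. A secondary technical point in case (II) is that \eqref{16} holds only in the $M_1$-topology, so the continuous mapping theorem cannot be applied directly to the integral functional; one must instead exploit the Skorohod coupling to extract convergence at the (Lebesgue-a.e.) continuity points of $\bar{Z}$ and combine this with the Potter-type domination on compact subintervals of $(0,1]$.
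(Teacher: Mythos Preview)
Your overall architecture matches the paper's: reduce to $\widehat C$ via Lemma \ref{au}, use stationarity plus integration by parts to rewrite $(\widehat C(t)-\mu^{-1}k(t))/(g(t)\varphi(t))$ as $W_t(1)$ plus an integral of $W_t$ against a rescaled measure, then split that integral at a threshold, handle the bulk with the functional limit theorem \eqref{16}, and control the contribution near the origin with Potter's bound combined with tightness of $W_v(1)$. The near-zero estimate you sketch is essentially the paper's inequality \eqref{mul}.

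There is, however, a genuine technical gap in your passage through the density $h_t(s)=-t\varphi'(ts)/\varphi(t)$. You invoke the monotone density theorem to get $-\varphi'(y)\sim\beta y^{-\beta-1}\ell(y)$ and a pointwise Potter bound $h_t(s)\le Cs^{-\beta-1-\epsilon}$, but both require $-\varphi'(t)=e^t|\psi'(e^t)|$ to be eventually monotone, which you assert without proof and which is not evident from the hypotheses on $1-W$. The paper sidesteps this entirely by working with the Stieltjes measures $\mu_t$ defined through $\mu_t((v,1])=\varphi(vt)/\varphi(t)$: regular variation of $\varphi$ alone gives $\mu_t\Rightarrow\mu$ with $\mu((v,1])=v^{-\beta}$ on each $[\varepsilon,1]$, and the bulk convergence then follows from the general continuous-mapping result Lemma \ref{impo7} rather than from dominated convergence of densities. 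A related symptom is the case $\beta=0$: your limiting density is identically zero, yet your Potter bound $h_t(s)\le Cs^{-1-\epsilon}$ is not integrable near $0$, so the truncation argument as written does not close. The paper handles $\beta=0$ by a separate (simpler) argument showing the entire integral term is $o_P(1)$, leaving only $W_t(1)\Rightarrow Z(1)$. Replacing your density manipulations by the measure-level reasoning of the paper fixes both issues without altering the rest of your outline.
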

\begin{proof}
The condition
$$\mmp\{|\log(1-W)|>x\} \ \sim \ x^{-\beta}\ell(x), \ \
x\to\infty$$ is equivalent to the following
$$\mmp\{1-W\leq x\} \ \sim \ (\log (1/x))^{-\beta}\ell(\log (1/x)), \ \
x\downarrow 0.$$ By Theorem 1.7.1' in \cite{BGT}, the latter is
equivalent to
\begin{equation}\label{818}
\varphi(t) \ \sim \ t^{-\beta}\ell(t), \ \ t\to\infty.
\end{equation}
Recalling the relation $g(t)\sim {\rm const}\,c(t)$, $t\to\infty$,
we conclude that
\begin{equation}\label{819}
\lit g(t)\varphi(t)=\infty.
\end{equation}
In view of Lemma \ref{au} it suffices to prove that
$$C^\ast(t):={\widehat{C}(t)-\mu^{-1}k(t)\over g(t)\varphi(t)} \
\dod \ \int_{[0,1]}v^{-\beta}{\rm d}Z(v), \ \ t\to\infty.$$

\noindent {\sc Case $\beta=0$}. Recalling $W_t(1) \dod Z(1)$,
$t\to\infty$ and using the equality
$$C^\ast(t)=W_t(1)+{\int_{[0,t]}(\widehat{N}(v)-\mu^{-1}v){\rm
d}(-\varphi(v))\over g(t)\varphi(t)}$$ we conclude that it remains
to check that the second term in the right-hand side converges to
zero in probability. According to \eqref{mul} (with $a=1$ and
$x(t)=g(t)\varphi(t)$), it is enough to show that
\begin{equation}\label{010}
\lit {\int_{[0,\,t]}g(v){\rm d}\big(-\varphi(v)\big)\over
g(t)\varphi(t)}=0.
\end{equation}

In view of Potter's bound (Theorem 1.5.6 in \cite{BGT}), given
$A>0$ and $\delta\in (0,1/\alpha-\beta)$ (here we take $\alpha=2$
in the case (I) of the lemma) there exists $t_0>0$ such that
\begin{equation}\label{456}
{g(tu)\over g(t)}\leq A u^{1/\alpha-\delta},
\end{equation}
whenever $t\geq t_0$, $tu\geq t_0$ and $u\leq 1$. Since
$${\int_{[t_0,t]}g(v){\rm d}(-\varphi(v))\over
g(t)\varphi(t)}\overset{\eqref{456}}{\leq} A {\int_{[t_0,\,
t]}v^{1/\alpha-\delta}{\rm d}(-\varphi(v))\over
t^{1/\alpha-\delta}\varphi(t)} \ \to \ 0, \ \ t\to\infty,$$ where
the last relation is justified by Theorem 1.6.4 in \cite{BGT}, and
$$\lit {\int_{[0,t_0]}g(v){\rm d}(-\varphi(v))\over
g(t)\varphi(t)}=0,$$ which holds in view of \eqref{819},
\eqref{010} follows.

\noindent {\sc Case $\beta\neq 0$}. We use the representation: for
any fixed $\varepsilon>0$,
\begin{eqnarray*}
C^\ast(t)&=&W_t(1)+\int_{[0,1]} W_t(v)\mu_t({\rm d}v)\\&=&
W_t(1)+\int_{[0,\,\varepsilon]}\ldots+\int_{[\varepsilon,\,1]}\ldots\\&=:&W_t(1)+I_1(\varepsilon,t)+I_2(\varepsilon,t),
\end{eqnarray*}
where the measure $\mu_t$ is defined by
$\mu_t((v,1])=\varphi(vt)/\varphi(t)$, $v\in [0,1)$.

According to \eqref{818}, as $t\to\infty$, $\mu_t$ converges
weakly on $[\varepsilon,1]$ to a measure $\mu$ defined by
$\mu((v,1]):=v^{-\beta}$. Together with \eqref{16} this entails
the convergence
$$I_2(\varepsilon, t) \ \dod \ \beta \int_{[\varepsilon,1]}Z(v) v^{-\beta-1}{\rm d}v, \ \ t\to\infty$$
by Lemma \ref{impo7}. Further one can check that
$$W_t(1)+I_2(\varepsilon, t) \ \dod \ Z(1)+ \beta \int_{[\varepsilon,1]}Z(v)v^{-\beta-1}{\rm d}v, \ \ t\to\infty.$$
According to Theorem 4.2 in \cite{Bill}, it remains to show that,
for any $\gamma>0$,
\begin{equation*}
\underset{\varepsilon\downarrow
0}{\lim}\,\underset{t\to\infty}{\lim\sup}\,\mmp\{|I_1(\varepsilon,
t)|>\gamma\}=0.
\end{equation*}
In view of inequality \eqref{mul} (with $a=\varepsilon$ and
$x(t)=g(t)\varphi(t)$) this will follow once we can prove that
\begin{equation}\label{011}
\underset{\varepsilon\downarrow
0}{\lim}\,\underset{t\to\infty}{\lim\sup}\,
{\int_{[0,\,\varepsilon t]}g(v){\rm d}(-\varphi(v))\over
g(t)\varphi(t)}=0.
\end{equation}
Since $\lit {\int_{[0,t_0]}g(v){\rm d}(-\varphi(v))\over
g(t)\varphi(t)}=0$ (use \eqref{819}) and
$${\int_{[t_0,t]}g(v){\rm d}(-\varphi(v))\over
g(t)\varphi(t)}\overset{\eqref{456}}{\leq} A {\int_{[t_0,\,
t]}v^{1/\alpha-\delta}{\rm d}(-\varphi(v))\over
t^{1/\alpha-\delta}\varphi(t)} \ \sim \ {\beta\over
1/\alpha-\beta-\delta}\varepsilon^{1/\alpha-\beta-\delta},$$ where
the last relation is justified by Theorem 1.6.4 in \cite{BGT},
\eqref{011} follows. The proof of the lemma is finished.
\end{proof}

\noindent {\sc Step 3}. The purpose of this intermediate step is
to combine results of the two previous steps into a single
statement concerning convergence in distribution of $L(t)$,
properly normalized and centered. In particular, we conclude that
under the assumptions of the theorem
\begin{equation}\label{8989}
{L(t)-b(t)\over a(t)} \ \dod \ X, \ \ t\to\infty,
\end{equation}
for $b(t):=\mu^{-1}k(\log t)$, case-dependent normalizing function
$a(t)$ and case-dependent random variable $X$. Now we identify the
functions $a(t)$ and the laws of $X$ for each case.\newline {\sc
Cases (a), (b1) and (c1)}: $X\od \mathcal{N}(0,1)$ and
$a(t)=\sqrt{\mu^{-1}k(\log t)}$. This immediately follows from
\eqref{565656}.

\noindent {\sc Case (b2)}: $X\od \mathcal{N}(0,1)$ and
$a(t)=\mu^{-3/2}c(\log t)\psi(t)$. By Lemma \ref{main6} and Lemma
\ref{22} (case $\beta=0$),
$${L(t)-C(\log t)\over \sqrt{\mu^{-1}k(\log t)}}\ \dod \ \mathcal{N}(0,1) \ \ \text{and} \ \
{C(\log t)-\mu^{-1}k(\log t)\over \mu^{-3/2}c(\log t)\psi(t)} \
\dod \ \mathcal{N}(0,1), \ \ t\to\infty,$$    According to
\eqref{818}, $\varphi(t)\sim \ell(t)$, $t\to\infty$. Therefore, as
$t\to\infty$, $k(t)\sim t\ell(t)$ (use Proposition 1.5.8 in
\cite{BGT}) and $c(t)\varphi(t)\sim t^{1/2}\ell^\ast(t)\ell(t)$
which, in view of the assumption $\lit
\ell(t)(\ell^\ast(t))^2=\infty$, implies $\lit {\sqrt{k(\log
t)}\over c(\log t)\psi(t)}=0$. Hence,
$${L(t)-\mu^{-1}k(\log t)\over \mu^{-3/2}c(\log t)\psi(t)} \
\dod \ \mathcal{N}(0,1), \ \ t\to\infty.$$

\noindent {\sc Case (c2)}: $X\od \int_{[0,1]}v^{-\beta}{\rm
d}Z(v)$ and $a(t)=\mu^{-1-1/\alpha}c(\log t)\psi(t)$. By Lemma
\ref{main6} and Lemma \ref{22},
$${L(t)-C(\log t)\over \sqrt{\mu^{-1}k(\log t)}}\ \dod \ \mathcal{N}(0,1) \ \ \text{and} \ \
{C(\log t)-\mu^{-1}k(\log t)\over \mu^{-1-1/\alpha}c(\log
t)\psi(t)} \ \dod \ \int_{[0,1]}v^{-\beta}{\rm d}Z(v), \ \
t\to\infty,$$ respectively. According to \eqref{818},
$\varphi(t)\sim t^{-\beta}\ell(t)$, $t\to\infty$. Therefore, as
$t\to\infty$, $k(t)\sim {\rm const}\,t^{1-\beta}\ell(t)$ by
Proposition 1.5.8 in \cite{BGT}, and $c(t)\varphi(t)\sim
t^{1/\alpha-\beta}\ell^\ast(t)\ell(t)$. While in the case
$\beta\in [0,2/\alpha-1)$ the relation $\lit {\sqrt{k(\log
t)}\over c(\log t)\psi(t)}=0$ holds trivially, in the case
$\beta=2/\alpha-1$ it is secured by the assumption $\lit
\ell(t)(\ell^\ast(t))^2=\infty$. Hence,
$${L(t)-\mu^{-1}k(\log t)\over \mu^{-1-1/\alpha}c(\log t)\psi(t)} \
\dod \ \int_{[0,1]}v^{-\beta}{\rm d}Z(v), \ \ t\to\infty.$$

\noindent {\sc Step 4}. Depoissonization.
%Let $\sum_k \epsilon_{(t_k, x_k)}$ be a Poisson random measure
%with mean measure given by ${\rm d}te^{-x}{\rm d}x$. The process
%$(X_t)_{t\geq 0}$ with $X_t:=\sum_{t_k\leq t}x_k$ is a compound
%Poisson process with unit intensity and jumps having standard
%exponential distribution. Now, with $z$ fixed, $\pi_z$ can be
%identified with the number of jumps of $(X_t)$ during the time
%$[0,z]$ which implies that $(\pi_z)_{z\geq 0}$ is a homogeneous
%Poisson process with unit intensity. Denote by $(\Tau_n)_{n\in\mn}$
%its arrival times.
Since $L(\tau_n)=L_n$, where $(\tau_n)_{n\in\mn}$ are arrival
times of $(\pi_t)$, it suffices to check that
$${L(\tau_n)-b(n)\over
a(n)} \ \dod \ X, \ \ n\to\infty.$$ In the subsequent computations
we will use arbitrary but fixed $x\in\mr$. Given such an $x$ we
will choose $n_0\in\mn$ and $t_0>0$ such that $n\pm x\sqrt{n}\geq
0$ for $n\geq n_0$ and $t\pm x\sqrt{t}\geq 0$ for $t\geq t_0$.
With this notation laid down all the inequalities or equalities
that follow will be considered either for $t\geq t_0$ or $n\geq
n_0$.

The functions $a(t)$ are slowly varying. While in the cases (b2)
and (c2) this is trivial, in the remaining cases, as has already
been mentioned, this follows from the equality $k(\log
t)=\int_{[1,\,t]}\psi(y)y^{-1}{\rm d}y$ and Theorem 1.3.1 in
\cite{BGT}. The slow variation implies that the convergence $\lit
{a(ty)\over a(t)}=1$ takes place locally uniformly in $y$. In
particular,
\begin{equation}\label{121212}
\lit {a(t\pm x\sqrt{t})\over a(t)}=1.
\end{equation}
The function $b(t)$ enjoys the following property
$$\lit \big(b(t\pm x\sqrt{t})-b(t)\big)=0$$ which entails
\begin{equation}\label{121211}
\lit {b(t\pm x\sqrt{t})-b(t)\over a(t)}=0.
\end{equation}
Indeed, $$0\leq b(t+x\sqrt{t})-b(t)=\mu^{-1}\int_{[t,
t+x\sqrt{t}]} y^{-1}\psi(y){\rm d}y\leq \mu^{-1}\psi(t)\log
(1+xt^{-1/2}) \ \sim \ \mu^{-1}\psi(t)xt^{-1/2},$$ and the
corresponding relation with 'minus' sign follows similarly.

Now \eqref{121212} and \eqref{121211} ensure that \eqref{8989} is
equivalent to
\begin{equation}\label{898989}
{L(t\pm x\sqrt{t})-b(t)\over a(t)} \ \dod \ X, \ \ t\to\infty.
\end{equation}
We will need the following observation
\begin{equation}\label{us}
{M(t+x\sqrt{t})-M(t-x\sqrt{t})\over a(t)}\ \tp \ 0, \ \
t\to\infty,
\end{equation}
where the notation $M(t)=M_{\pi_t}$ has to be recalled. Actually,
we can prove a stronger assertion
\begin{equation*}
M(t+x\sqrt{t})-M(t-x\sqrt{t}) \ \tp \ 0, \ \ t\to\infty,
\end{equation*}
as follows. Since $M(t)$ is nondecreasing it suffices to show that
the expectation of the left-hand side converges to zero. To this
end, we first prove the formula
\begin{equation}\label{245}
\me M(t)=\me \sum_{k\geq 0}\big(1-\exp(-te^{-S_k})\big)=\me
\int_{[0,\,\infty)}\big(1-\exp(-te^{-y})\big){\rm d}N(y).
\end{equation}
We use a variant of the random occupancy scheme with the random
frequencies $P_k$'s defined in the Introduction in which balls are
thrown at the arrival times of the Poisson process $(\pi_t)$. It
is clear that $M(t)=0$ on the event $\{\pi_t=0\}$ and that
$$M(t)=\inf\{k\in\mn: \pi_{k+1,\,t}+\pi_{k+2,\,t}+\ldots=0\}$$ on the event $\{\pi_t\geq 1\}$, where $\pi_{k,\,t}$ is the
number of balls (out of $\pi_t$) falling in the $k$th box. Given
$\big(P_k\big)$ $(\pi_{j,\,t})_{t\geq 0}$ is a Poisson process
with intensity $P_j$, and, for different $j$, these Poisson
processes are independent. With this at hand, it remains to write
\begin{eqnarray*}
\me \big(M(t)|(P_j)\big)&=&\sum_{k\geq
0}\mmp\{M(t)>k|(P_j)\}=1-e^{-t}+ \sum_{k\geq
1}\mmp\{\pi_{k+1,\,t}+\pi_{k+2,\,t}+\ldots\geq
1|(P_j)\}\\&=&1-e^{-t}+\sum_{k\geq
1}\big(1-\exp(-t(1-P_1-\ldots-P_k))\big)\\&=&\sum_{k\geq
0}\big(1-\exp(-te^{-S_k})\big),
\end{eqnarray*}
and \eqref{245} follows on passing to the expectation. Using
\eqref{245} we have
\begin{eqnarray*}
&& \me \bigg(M(t+x\sqrt{t})-M(t-x\sqrt{t})\bigg)\\&=&\me
\int_{[0,\,\infty)}\bigg(\exp\big(-(t-x\sqrt{t})e^{-y}\big)-\exp\big(-(t+x\sqrt{t})e^{-y}\big)\bigg){\rm
d}N(y)\\&\leq & {2x\sqrt{t}\over t-x\sqrt{t}}\me
\int_{[0,\,\infty)}\exp\big(-(t-x\sqrt{t})e^{-y}\big)(t-x\sqrt{t})e^{-y}{\rm
d}N(y)\\&\sim& 2\mu^{-1}xt^{-1/2}, \ \ t\to\infty,
\end{eqnarray*}
where the last relation follows by the key renewal theorem (the
function $t\mapsto \exp\big(t-e^t\big)$ is directly Riemann
integrable on $\mr$ since it is integrable on $\mr$ and
nonnegative, and $t\mapsto \exp(-e^t)$ is a nonincreasing
function).

Further, setting $A_n(x):=\{|\tau_n-n|>x\sqrt{n}\}$ and recalling
the notation $K(t):=K_{\pi_t}$, we have, for any $\varepsilon>0$,
\begin{eqnarray*}
\mmp\bigg\{{L(\tau_n)-L(n-x\sqrt{n})\over
a(n)}>2\varepsilon\bigg\}&=&\mmp\bigg\{{M(\tau_n)-K(\tau_n)-L(n-x\sqrt{n})\over
a(n)}>2\varepsilon\bigg\}\nonumber\\&=& \mmp\big\{\ldots
1_{A^c_n(x)}+\ldots 1_{A_n(x)}>2\varepsilon\big\}\nonumber\\&\leq&
\mmp\bigg\{{M(n+x\sqrt{n})-K(n-x\sqrt{n})-L(n-x\sqrt{n})\over
a(n)}>\varepsilon\bigg\}\nonumber\\&+&\mmp\big\{\ldots
1_{A_n(x)}>\varepsilon\big\}\nonumber\\&\leq&\mmp\bigg\{{M(n+x\sqrt{n})-M(n-x\sqrt{n})\over
a(n)}>\varepsilon\bigg\}+\mmp\big(A_n(x)\big).
\end{eqnarray*}
Hence
\begin{equation}\label{op2}
\underset{n\to\infty}{\lim\sup}\,\mmp\bigg\{{L(\tau_n)-L(n-x\sqrt{n})\over
a(n)}>2\varepsilon\bigg\}\leq
\mmp\big\{|\mathcal{N}(0,1)|>x\big\},
\end{equation}
by \eqref{us} and the central limit theorem. Since the law of $X$
is continuous, we conclude that, for any $y\in\mr$ and any
$\varepsilon>0$,
\begin{eqnarray*}
\underset{n\to\infty}{\lim\sup}\,\mmp\bigg\{{L(\tau_n)-b(n)\over
a(n)}>y\bigg\}&\leq&
\underset{n\to\infty}{\lim\sup}\,\mmp\bigg\{{L(\tau_n)-L(n-x\sqrt{n})\over
a(n)}>2\varepsilon\bigg\}\\&+&\lin\,\mmp\bigg\{{L(n-x\sqrt{n})-b(n)\over
a(n)}>y-2\varepsilon\bigg\}\\&\overset{\eqref{898989},\eqref{op2}}{\leq}&
\mmp\big\{|\mathcal{N}(0,1)|>x\big\}+\mmp\big\{X>y-2\varepsilon\big\}.
\end{eqnarray*}
Letting now $x\to\infty$ and then $\varepsilon\downarrow 0$ gives
$$\underset{n\to\infty}{\lim\sup}\,\mmp\bigg\{{L(\tau_n)-b(n)\over
a(n)}>y\bigg\}\leq \mmp\big\{X>y\big\}.$$ Arguing similarly we
infer
\begin{equation}\label{op3}
\underset{n\to\infty}{\lim\sup}\,\mmp\bigg\{{L(n+x\sqrt{n})-L(\tau_n)\over
a(n)}>2\varepsilon\bigg\}\leq \mmp\big\{|\mathcal{N}(0,1)|>x\big\}
\end{equation}
and then
\begin{eqnarray*}
\underset{n\to\infty}{\lim\inf}\,\mmp\bigg\{{L(\tau_n)-b(n)\over
a(n)}>y\bigg\}&\geq& \lin\,\mmp\bigg\{{L(n+x\sqrt{n})-b(n)\over
a(n)}>y+2\varepsilon\bigg\}\\&-&
\underset{n\to\infty}{\lim\sup}\,\mmp\bigg\{{L(n+x\sqrt{n})-L(\tau_n)\over
a(n)}>2\varepsilon\bigg\}\\&\overset{\eqref{898989},\eqref{op3}}{\geq}&
\mmp\big\{X>y+2\varepsilon\big\}-\mmp\big\{|\mathcal{N}(0,1)|>x\big\}.
\end{eqnarray*}
Letting $x\to\infty$ and then $\varepsilon\downarrow 0$ we arrive
at
$$\underset{n\to\infty}{\lim\inf}\,\mmp\bigg\{{L(\tau_n)-b(n)\over
a(n)}>y\bigg\}\geq \mmp\big\{X>y\big\}.$$ The proof of Theorem
\ref{main7} is complete.

% in view of . Arguing similarly we obtain, as $n\to\infty$ and
%$x\to\infty$,
%\begin{eqnarray*}
%{L(\tau_n)-L(n+x\sqrt{n})\over
%a(n)}&=&{M(\tau_n)-K(\tau_n)-L(n+x\sqrt{n})\over
%a(n)}\nonumber\\&\geq& \ldots 1_{A^c_n(x)}\nonumber\\&\geq&
%{M(n-x\sqrt{n})-K(n+x\sqrt{n})-L(n+x\sqrt{n})\over
%a(n)}1_{A^c_n(x)}
%\nonumber\\&=&-{M(n+x\sqrt{n})-M(n-x\sqrt{n})\over
%a(n)}1_{A^c_n(x)}=o_P(1).\label{op4}
%\end{eqnarray*} From this we conclude that, as $n\to\infty$ and then $x\to\infty$,
%\begin{eqnarray*}
%{L(\tau_n)-b(n)\over a(n)}&=&{L(\tau_n)-L(n+x\sqrt{n})\over a(n)}+
%{L(n+x\sqrt{n})-b(n)\over a(n)}\\&\geq&
%o_P(1)+{L(n+x\sqrt{n})-b(n)\over a(n)} \ \dod \ X.
%\end{eqnarray*}

\begin{rem}\label{en}
Here we discuss what is known in cases (b3) and (c3) introduced in
Remark \ref{en1}. \newline {\sc Case (b3)}: By Lemma \ref{main6}
and Lemma \ref{22},
\begin{equation}\label{iii}
{L(t)-C(\log t)\over \sqrt{\mu^{-1}k(\log t)}}\ \dod \ X_1 \ \
\text{and} \ \ {C(\log t)-\mu^{-1}k(\log t)\over \mu^{-3/2}c(\log
t)\psi(t)} \ \dod \ X_2, \ \ t\to\infty,
\end{equation}
respectively, where $X_1$ and $X_2$ are random variables with the
standard normal distribution. According to \eqref{818},
$\varphi(t)\sim d(\ell^\ast(t))^{-2}$, $t\to\infty$. Therefore, as
$t\to\infty$, $k(t)\sim dt(\ell^\ast(t))^{-2}$ (use Proposition
1.5.8 in \cite{BGT}) and $c(t)\varphi(t)\sim d
t^{1/2}(\ell^\ast(t))^{-1}$. Consequently, \eqref{iii} is
equivalent to $${\ell^\ast(\log t)\over \log^{1/2}
t}\big(L(t)-C(\log t)\big) \ \dod \ (d/\mu)^{1/2}X_1 \ \
\text{and} \ \ {\ell^\ast(\log t)\over \log^{1/2} t}\big(C(\log
t)-\mu^{-1}k(\log t)\big) \ \dod \ d\mu^{-3/2}X_2, \ \
t\to\infty.$$ However, we do not know whether the joint
convergence of these ratios takes place, nor do we know how
dependent the random variables $X_1$ and $X_2$ are. The same
remark concerns formula \eqref{xxx} given below.
\newline {\sc Case (c3)}: By Lemma \ref{main6} and Lemma \ref{22},
\begin{equation}\label{xy}
{L(t)-C(\log t)\over \sqrt{\mu^{-1}k(\log t)}}\ \dod \ X_1 \ \
\text{and} \ \ {C(\log t)-\mu^{-1}k(\log t)\over
\mu^{-1-1/\alpha}c(\log t)\psi(t)} \ \dod \ X_2, \ \ t\to\infty,
\end{equation}
respectively, where $X_1\od \mathcal{N}(0,1)$ and $X_2\od
\int_{[0,1]}v^{1-2/\alpha}{\rm d}Z(v)$. According to \eqref{818},
$\varphi(t)\sim dt^{1-2/\alpha}(\ell^\ast(t))^{-2}$, $t\to\infty$.
Therefore, as $t\to\infty$, $k(t)\sim
d(2-2/\alpha)^{-1}t^{2-2/\alpha}(\ell^\ast(t))^{-2}$ by
Proposition 1.5.8 in \cite{BGT}, and $c(t)\varphi(t)\sim
dt^{1-1/\alpha}(\ell^\ast(t))^{-1}$. Consequently, \eqref{xy} is
equivalent to
\begin{equation}\label{xxx}
{\ell^\ast(t)\over t^{1-1/\alpha}}\big(L(t)-C(\log t)\big) \ \dod
\ (2\mu(1-1/\alpha)/d)^{-1/2}X_1, \ {\ell^\ast(t)\over
t^{1-1/\alpha}}\big(C(\log t)-\mu^{-1}k(\log t)\big) \ \dod \
d\mu^{-1-1/\alpha}X_2.
\end{equation}

It seems that in order to settle the weak convergence issue in
these cases one has to investigate the weak convergence of
$L^\ast(t)=\sum_{k\geq
1}\exp(-te^{-S_{k-1}}(1-W_k))1_{\{S_{k-1}\leq \log t\}}$ directly,
i.e. without using the decomposition
$L^\ast(t)=\big(L^\ast(t)-C(\log t)\big)+C(\log t)$.
\end{rem}

\section{Answering a question asked in \cite{Res}}\label{re}

Let $\big(\xi_k, \eta_k\big)_{k\in\mn}$ be independent copies of a
random vector $(\xi,\eta)$ with $\xi>0$ and $\eta\geq 0$ a.s. An
arbitrary dependence between $\xi$ and $\eta$ is allowed. In what
follows we also assume that ${\tt m}:=\me \xi<\infty$, $\me
\eta=\infty$, and that the law of $\xi$ is non-lattice. Set
$$V(t):=\sum_{k\geq 1}1_{\{\widetilde{S}_{k-1}\leq t<\widetilde{S}_{k-1}+\eta_k\}}, \ \ t\geq
0,$$ where $$\widetilde{S}_0:=0, \ \
\widetilde{S}_k:=\xi_1+\ldots+\xi_k, \ \ k\in\mn.$$ Assuming that
$\xi$ and $\eta$ are independent and that
$\bar{G}(x):=\mmp\{\eta>x\}$ is regularly varying at $\infty$ with
index $-\beta$, $\beta\in[0,1)$, Proposition 3.2 in \cite{Res}
proves\footnote{Actually the cited result treats the
finite-dimensional convergence.} that
\begin{equation}\label{3232}
{V(t)-\sum_{k\geq
1}\bar{G}(t-\widetilde{S}_{k-1})1_{\{\widetilde{S}_{k-1}\leq
t\}}\over \sqrt{{\tt m}^{-1}\int_{[0,\,t]}\bar{G}(y){\rm d}y}} \
\dod \ \mathcal{N}(0,1), \ \ t\to\infty.
\end{equation}
In Problem 1 of Section 5.2 in \cite{Res} the authors ask ``when
can the random centering be replaced by a non-random centering?"
Relying on the results developed in Section \ref{ma} we can answer
this question in an extended setting where $\xi$ and $\eta$ are
not necessarily independent. In particular, the replacement is
possible, i.e.,
$${V(t)-{\tt m}^{-1}\int_{[0,\,t]}\bar{G}(y){\rm d}y\over \sqrt{{\tt
m}^{-1}\int_{[0,\,t]}\bar{G}(y){\rm d}y}} \ \dod \
\mathcal{N}(0,1), \ \ t\to\infty,$$ if either of the following
three conditions holds:
\begin{itemize}
\item $\me \xi^2<\infty$

\item $\me \xi^2=\infty$, $\int_{[0,\,x]}y^2\mmp\{\xi\in {\rm d}y\} \sim
\widetilde{\ell}(x)$, $x\to\infty$, where $\widetilde{\ell}$ is
slowly varying at $\infty$, and $\lix \bar{G}(x)c^2(x)x^{-1}=0$,
where $c(x)$ is any positive function which satisfies $\lix
{x\widetilde{\ell}(c(x))\over c^2(x)}=1$

\item $\mmp\{\xi>x\}\sim x^{-\alpha}\widetilde{\ell}(x)$,
$x\to\infty$ for some $\alpha\in (1,2)$ and some
$\widetilde{\ell}$ slowly varying at $\infty$ and $\lix
\bar{G}(x)c^2(x)x^{-1}=0$, where $c(x)$ is any positive function
which satisfies $\lix {x\widetilde{\ell}(c(x))\over
c^\alpha(x)}=1$
\end{itemize}

\noindent The replacement is not possible if either of the
following two conditions holds:
\begin{itemize}
\item $\me \xi^2=\infty$, $\int_{[0,\,x]}y^2\mmp\{\xi\in {\rm d}y\} \sim
\widetilde{\ell}(x)$, $x\to\infty$, where $\widetilde{\ell}$ is
slowly varying at $\infty$; $\bar{G}(x)\sim \ell(x)$,
$x\to\infty$, where $\ell$ is slowly varying at $\infty$, and
$\lix \bar{G}(x)c^2(x)x^{-1}=\infty$, where $c(x)$ is any positive
function which satisfies $\lix {x\widetilde{\ell}(c(x))\over
c^2(x)}=1$

\item $\mmp\{\xi>x\}\sim x^{-\alpha}\widetilde{\ell}(x)$,
$x\to\infty$, for some $\alpha\in (1,2)$ and some
$\widetilde{\ell}$ slowly varying at $\infty$; $\bar{G}(x)\sim
x^{-\beta}\ell(x)$, $x\to\infty$, for some $\beta\in [0,
2/\alpha-1]$ and some $\ell$ slowly varying at $\infty$; $\lix
\bar{G}(x)c^2(x)x^{-1}=\infty$ if $\beta=2/\alpha-1$, where
$c(x)$ is any positive function which satisfies $\lix
{x\widetilde{\ell}(c(x))\over c^\alpha(x)}=1$
\end{itemize}
In these cases $${V(t)-{\tt m}^{-1}\int_{[0,\,t]}\bar{G}(y){\rm
d}y\over {\tt m}^{-1-1/\alpha}c(t)\bar{G}(t)} \dod \ X, \ \
t\to\infty,$$ where in the first case $\alpha=2$ and $X\od
\mathcal{N}(0,1)$, and in the second case $X\od
\int_{[0,1]}v^{-\beta}{\rm d}Z(v)$, where $(Z(v))_{v\in [0,1]}$ is
an $\alpha$-stable L\'{e}vy process with characteristic function
\eqref{st1}.

To justify these statements we first note that mimicking the proof
of Lemma \ref{main6} we can check that relation \eqref{3232} holds
under the standing assumptions of this section. Let $E$ be a
random variable with the standard exponential distribution which
is independent of everything else. We claim that
\begin{equation}\label{101010}
-\int_{[1,\,\infty)}\widetilde{N}(\log x)e^{-x}{\rm d}x
\overset{d}{\leq} R(t):= \sum_{k\geq
0}\big(\bar{G}(t-\widetilde{S}_k)-\widehat{\varphi}(t-\widetilde{S}_k)\big)1_{\{\widetilde{S}_k\leq
t\}}\overset{d}{\leq} \int_{[0,\,1]}\widetilde{N}(|\log
x|)e^{-x}{\rm d}x,
\end{equation}
where
$$\widetilde{N}(t):=\inf\{k\in\mn_0: \widetilde{S}_k>t\} \ \ \text{and} \ \ \widehat{\varphi}(t):=\me \exp(-e^{t-\eta}), \ \ t\geq
0.$$ Using the subadditivity of $t\to t^+$, $t\in\mr$ and the
distributional subadditivity of $\widetilde{N}(t)$ (see
\eqref{yyy}) we obtain
\begin{eqnarray*}
\int_{[0,\,t]}\big( 1_{\{\eta>t-y\}}-1_{\{\log
E+\eta>t-y\}}\big){\rm d}\widetilde{N}(y)&=&
\widetilde{N}\big((t-\eta-\log
E)^+)-\widetilde{N}((t-\eta)^+\big)\\&\leq&
\widetilde{N}\big((t-\eta)^++ (\log
E)^-)-\widetilde{N}((t-\eta)^+\big)\\&\overset{d}{\leq}&
\widetilde{N}\big((\log E)^-\big).
\end{eqnarray*}
Hence
\begin{eqnarray*}
R(t)&=&\me_{\eta, E} \int_{[0,\,t]}\big(
1_{\{\eta>t-y\}}-1_{\{\log E+\eta>t-y\}}\big){\rm
d}\widetilde{N}(y)\\&\overset{d}{\leq}& \me_{\eta,
E}\widetilde{N}\big((\log
E)^-\big)=\int_{[0,\,1]}\widetilde{N}(|\log x|)e^{-x}{\rm d}x.
\end{eqnarray*}
The lower bound in \eqref{101010} can be proved similarly.

With \eqref{101010} at hand, we conclude that Lemma \ref{222} and
Lemma \ref{22} are still valid if $\varphi(t)$ is replaced by
$\bar{G}(t)$ and $C(t)$ is replaced by $\sum_{k\geq
1}\bar{G}(t-\widetilde{S}_{k-1})1_{\{\widetilde{S}_{k-1}\leq
t\}}$. It remains to combine these generalizations of Lemma
\ref{222} and Lemma \ref{22} and our extended version of
\eqref{3232}.

\section{Appendix}

Lemma \ref{Toeplitz1} which is our main technical tool for proving
Theorem \ref{main} is a rather particular case of a Toeplitz-
Schur theorem (see Lemma 8.1 in \cite{GIM2}). On the other hand,
this result follows immediately by an application of the Lebesgue
bounded convergence theorem.
\begin{lemma}\label{Toeplitz1}
Let $(s_n)_{n\in\mn}$ be a sequence of real numbers such that
$\lin s_n=s\in (0,\infty)$ and $(c_{n,\,m})_{n\in\mn, m\in\mn}$ an
array of nonnegative numbers which satisfy (A) $\lin c_{n,\,m}=0$,
for each $m\in\mn$, and (B) $\sum_{m=1}^{n}c_{n,\,m}=1$. Then
$\lin \sum_{m=1}^n c_{n,\,m}s_m=s$.
\end{lemma}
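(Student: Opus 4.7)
My plan is to give the standard ``regular summation'' argument, essentially the Toeplitz--Schur proof adapted to the probabilistic form stated here. First I would use hypothesis (B) to rewrite
$$\sum_{m=1}^n c_{n,m}s_m - s \ = \ \sum_{m=1}^n c_{n,m}(s_m-s),$$
which reduces the problem to showing that the right-hand side tends to $0$. Fix $\varepsilon>0$ and, using $s_m\to s$, choose $M\in\mn$ so large that $|s_m-s|<\varepsilon$ for all $m>M$. Put $B:=\sup_{m\in\mn}|s_m-s|$, which is finite since convergent sequences are bounded.

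Next I would split the sum at the cutoff $M$ and estimate separately: using nonnegativity of $c_{n,m}$ and (B),
$$\bigg|\sum_{m=1}^n c_{n,m}(s_m-s)\bigg| \ \leq \ B\sum_{m=1}^M c_{n,m} \ + \ \varepsilon\sum_{m=M+1}^n c_{n,m} \ \leq \ B\sum_{m=1}^M c_{n,m} \ + \ \varepsilon.$$
Since $M$ is fixed and each $c_{n,m}\to 0$ by (A), the head contribution $B\sum_{m=1}^M c_{n,m}$ vanishes as $n\to\infty$. Hence $\limsup_{n\to\infty}|\sum_{m=1}^n c_{n,m}s_m-s|\leq\varepsilon$, and letting $\varepsilon\downarrow 0$ concludes the proof.

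As a sanity check matching the hint in the paper's text, one can package this as a bounded convergence statement: view $\mu_n(\{m\}):=c_{n,m}$ as probability measures on $\mn$ (use (B)); by (A) these measures escape to infinity on the one-point compactification $\mn\cup\{\infty\}$, so $\mu_n\Rightarrow\delta_\infty$. Extending $s_\infty:=s$ makes $(s_m)$ continuous and bounded on this compact space, so $\int s_m\,{\rm d}\mu_n\to s_\infty=s$, which is the claim. No step is genuinely difficult here; the only point one must not overlook is that the direct invocation of dominated convergence on $\mn$ with counting measure fails (there is no summable dominating sequence for $(c_{n,m}s_m)_m$), which is precisely why one has to work with the probability measures $\mu_n$ or, equivalently, exploit (B) before splitting at $M$.
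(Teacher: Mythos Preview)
Your proof is correct and matches the paper's approach: the paper does not spell out a proof but only remarks that the lemma is a special case of a Toeplitz--Schur theorem and ``follows immediately by an application of the Lebesgue bounded convergence theorem.'' Your split-at-$M$ argument is precisely the classical Toeplitz proof, and your probabilistic repackaging (viewing $(c_{n,m})_m$ as laws escaping to $\infty$ and applying bounded convergence to $s_{X_n}$) is exactly the interpretation the paper has in mind; your observation that naive dominated convergence on $\mn$ with counting measure fails is a useful clarification of why one must pass through the probability measures $\mu_n$.
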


\begin{lemma}\label{taub}
Let $\xi$ and $\eta$ be positive random variables. The relation
$$\underset{x\downarrow 0}{\lim}\,{\mmp\{\xi\leq x\}\over \mmp\{\eta\leq x\}}=c\in [0,\infty]$$ entails $$\underset{y\to\infty}{\lim}\,
{\me e^{-y\xi}\over \me e^{-y\eta}}=c.$$
\end{lemma}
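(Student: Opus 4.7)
\medskip

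\noindent \textit{Proof proposal.} The plan is to pass from probabilities to Laplace transforms via an elementary integration by parts, and then exploit the hypothesis that $F_\xi(x)/F_\eta(x)\to c$ as $x\downarrow 0$ on a ``local scale'' while showing that the contribution to the transforms coming from $\xi$ or $\eta$ being of order $\geq\delta$ is negligible. Writing $F_\xi(x):=\mmp\{\xi\leq x\}$ and $F_\eta(x):=\mmp\{\eta\leq x\}$, integration by parts followed by the substitution $u=yx$ gives the representation
$$
\me e^{-y\xi}=y\int_0^\infty e^{-yx}F_\xi(x)\,{\rm d}x=\int_0^\infty e^{-u}F_\xi(u/y)\,{\rm d}u,
$$
and analogously for $\eta$. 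It is this rescaled form that ties the behaviour of the ratio $F_\xi/F_\eta$ at $0$ to that of the ratio of the Laplace transforms at $\infty$.

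\smallskip

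I will first treat the case $c\in(0,\infty)$. Given $\varepsilon>0$, pick $\delta>0$ such that $(c-\varepsilon)F_\eta(x)\leq F_\xi(x)\leq (c+\varepsilon)F_\eta(x)$ for all $x\in (0,\delta)$; note that the hypothesis forces $F_\eta(x)>0$ for all sufficiently small $x>0$, so in particular $0$ is a limit point of the support of $\eta$. Split the representation above at $u=y\delta$. On $u\in[0,y\delta]$ we have $u/y\in [0,\delta]$, so the integrand satisfies the two-sided bound, and after integrating,
$$
(c-\varepsilon)\int_0^{y\delta}e^{-u}F_\eta(u/y)\,{\rm d}u\ \leq\ \int_0^{y\delta}e^{-u}F_\xi(u/y)\,{\rm d}u\ \leq\ (c+\varepsilon)\int_0^{y\delta}e^{-u}F_\eta(u/y)\,{\rm d}u.
$$
The tail integrals $\int_{y\delta}^\infty e^{-u}F_\xi(u/y)\,{\rm d}u$ and $\int_{y\delta}^\infty e^{-u}F_\eta(u/y)\,{\rm d}u$ are each bounded by $e^{-y\delta}$. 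Combining these bounds with the identity $\me e^{-y\eta}=\int_0^{y\delta}+\int_{y\delta}^\infty$ yields
$$
\frac{\me e^{-y\xi}}{\me e^{-y\eta}}\ \leq\ (c+\varepsilon)+\frac{e^{-y\delta}}{\me e^{-y\eta}}\quad\text{and}\quad\frac{\me e^{-y\xi}}{\me e^{-y\eta}}\ \geq\ (c-\varepsilon)-(c-\varepsilon)\frac{e^{-y\delta}}{\me e^{-y\eta}}.
$$

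\smallskip

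The key remaining observation is that $e^{-y\delta}=o(\me e^{-y\eta})$ as $y\to\infty$: choose any $\delta'\in(0,\delta)$ with $F_\eta(\delta')>0$ (possible by the remark above), and estimate
$$
\me e^{-y\eta}\ \geq\ \me\bigl(e^{-y\eta}1_{\{\eta\leq \delta'\}}\bigr)\ \geq\ e^{-y\delta'}F_\eta(\delta'),
$$
so that $e^{-y\delta}/\me e^{-y\eta}\leq F_\eta(\delta')^{-1}e^{-y(\delta-\delta')}\to 0$. Feeding this into the two inequalities displayed above gives $c-\varepsilon\leq\liminf_{y\to\infty}\me e^{-y\xi}/\me e^{-y\eta}\leq\limsup_{y\to\infty}\me e^{-y\xi}/\me e^{-y\eta}\leq c+\varepsilon$, and since $\varepsilon$ is arbitrary the limit equals $c$. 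The main technical point to justify carefully is precisely this tail estimate. The case $c=0$ is handled by repeating the upper-bound argument with $(c+\varepsilon)$ replaced by $\varepsilon$ and using $F_\xi(x)\leq\varepsilon F_\eta(x)$ on $(0,\delta)$; the case $c=\infty$ follows by interchanging the roles of $\xi$ and $\eta$ and invoking the case $c=0$.
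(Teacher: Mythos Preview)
Your proof is correct and follows essentially the same route as the paper: both arguments express $\me e^{-y\xi}$ as $y\int_0^\infty e^{-yx}F_\xi(x)\,{\rm d}x$, split the integral at a fixed threshold, apply the hypothesis on the small-$x$ part, and show the tail is negligible relative to $\me e^{-y\eta}$ by bounding the latter from below via $F_\eta(\delta')e^{-y\delta'}$ for some $\delta'<\delta$ (the paper uses the equivalent estimate $\int_{x_0/2}^{x_0}e^{-yx}\mmp\{\eta\leq x\}\,{\rm d}x\geq \mmp\{\eta\leq x_0/2\}y^{-1}(e^{-yx_0/2}-e^{-yx_0})$). The substitution $u=yx$ and the explicit treatment of the lower bound are the only cosmetic differences.
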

\begin{proof}
By symmetry, it suffices to consider the case $c\in [0,\infty)$.
For any $\varepsilon>0$ there exists $x_0>0$ such that
$\mmp\{\xi\leq x\}/\mmp\{\eta\leq x\}\leq c+\varepsilon$ for all
$x\in (0,x_0]$. With this $x_0$ we have

\begin{eqnarray*}
{\me e^{-y\xi}\over \me e^{-y\eta}}&\leq&
{\int_0^{x_0}e^{-yx}\mmp\{\xi\leq x\}{\rm d}x+ \int_{x_0}^\infty
e^{-yx}\mmp\{\xi\leq x\}{\rm d}x\over
\int_0^{x_0}e^{-yx}\mmp\{\eta\leq x\}{\rm d}x}\\&\leq&
{(c+\varepsilon)\int_0^{x_0}e^{-yx}\mmp\{\eta\leq x\}{\rm
d}x+y^{-1}e^{-yx_0}\over \int_0^{x_0}e^{-yx}\mmp\{\eta\leq x\}{\rm
d}x}\\&\leq& (c+\varepsilon)+{y^{-1}e^{-yx_0}\over
\int_{x_0/2}^{x_0}e^{-yx}\mmp\{\eta\leq x\}{\rm d}x}\\&\leq&
(c+\varepsilon)+{1\over \mmp\{\eta\leq x_0/2\} (e^{yx_0/2}-1)}.
\end{eqnarray*}
Sending $y\to\infty$ and then $\varepsilon\to 0$ proves
$$\underset{y\to\infty}{\lim\sup}{\me e^{-y\xi}\over \me e^{-y\eta}}\leq c.$$ The lower limit (when $c>0$) can be treated similarly.
\end{proof}
%\begin{rem}
%The converse implication of the lemma does not always hold.
%Inspired by Example 1 in \cite{Stadt} we have the following
%counterexample. Let $\xi$ and $\eta$ to be random variables with
%distributions
%$$\mmp\{\xi=2^{-k}\}=2^{-k}, \ \
%\mmp\{\eta=2^{-k}+4^{-k}\}=2^{-k}, \ \ k\in\mn.$$ Then
%$\underset{y\to\infty}{\lim}\, {\me e^{-y\xi}\over \me
%e^{-y\eta}}=1$ whereas
%$\underset{n\to\infty}{\lim}\,{\mmp\{\xi\leq 2^{-n}\}\over
%\mmp\{\eta\leq 2^{-n}\}}=2$.
%\end{rem}
Before stating the next result we recall notation: $\varphi(t)=\me
\exp(-e^t(1-W))$, $k(t)=\int_{[0,t]}\varphi(y){\rm d}y$. The
functions $g(t)$ were defined in the paragraph preceding Lemma
\ref{222}.
\begin{lemma}\label{rela}
Assume that $\nu=\infty$ and that either conditions
\eqref{domain0} and \eqref{555} or \eqref{domain1} and
\eqref{555555} hold, or $\sigma^2<\infty$. Then
\begin{equation}\label{rel}
\lit {g(t)\varphi(t)\over \sqrt{k(t)}}=0 \ \ \text{and} \ \ \lit
{\int_{[0,t]}g(y){\rm d}(-\varphi(y))\over \sqrt{k(t)}}=0.
\end{equation}
\end{lemma}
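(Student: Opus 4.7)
The plan is to establish the two limits in \eqref{rel} successively, handling the easier first limit and then bootstrapping to the second.

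\textbf{First limit.} Since $\varphi(y)=\me e^{-e^y(1-W)}$ is nonincreasing, $k(t)\geq (t/2)\varphi(t)$, and it suffices to prove $g(t)^2\varphi(t)/t\to 0$. Splitting the expectation defining $\varphi(t)$ at $\{1-W\leq e^{-t/2}\}$ gives
$$\varphi(t)\leq \mmp\{|\log(1-W)|>t/2\}+\exp(-e^{t/2}).$$
If $\sigma^2<\infty$ one has $g(t)^2/t\to\sigma^2\mu^{-3}$ and $\varphi(t)\to 0$ automatically (as $W<1$ a.s.). In cases (b1) and (c1) the formula for $g(t)^2/t$ (which is, up to a constant, $(\ell^\ast(t))^2$ or $t^{2/\alpha-1}(\ell^\ast(t))^2$ respectively), the slow variation of $\ell^\ast$ (so $\ell^\ast(t/2)\sim\ell^\ast(t)$), and assumption \eqref{555} (resp.\ \eqref{555555}) deliver the conclusion.

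\textbf{Second limit.} The identity $-\varphi'(y)=e^y\me[(1-W)e^{-e^y(1-W)}]$, combined with Fubini and the substitution $s=e^y(1-W)$, yields the representation
$$\int_{[0,\,t]}g(y)\,{\rm d}(-\varphi(y))=\me\int_{1-W}^{e^t(1-W)}g\big(\log s+|\log(1-W)|\big)\,e^{-s}\,{\rm d}s.$$
Splitting the outer expectation at $\{|\log(1-W)|\leq t\}$ versus its complement, using the subadditivity $g(a+b)\leq g(a)+g(b)$ (valid since $g$ is concave with $g(0)=0$) together with the integrability of $e^{-s}$, and then integrating by parts on $\me[g(|\log(1-W)|)1_{\{|\log(1-W)|\leq t\}}]$, one derives
$$\int_{[0,\,t]}g(y)\,{\rm d}(-\varphi(y))\;\leq\;\int_{[0,\,t]}\mmp\{|\log(1-W)|>y\}\,g'(y)\,{\rm d}y+O(1).$$
A second application of Fubini identifies the right-hand integral with $\me g(|\log(1-W)|\wedge t)-g(0)$, reducing the statement to
$$\me\,g\big(|\log(1-W)|\wedge t\big)=o\big(\sqrt{k(t)}\big),\qquad t\to\infty.$$
This is verified in each case by applying Karamata's theorem to the regularly varying integrand $g'$, invoking the tail hypothesis \eqref{555} (resp.\ \eqref{555555}), and bounding $\sqrt{k(t)}$ from below via $\varphi(y)\geq e^{-1}\mmp\{|\log(1-W)|>y\}$, which upon integration gives $k(t)\geq e^{-1}\int_{[0,\,t]}\mmp\{|\log(1-W)|>y\}\,{\rm d}y$.

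\textbf{Main obstacle.} The crux of the proof is the little-$o$ estimate on $\me g(|\log(1-W)|\wedge t)$: a Jensen or Cauchy--Schwarz argument yields only an $O$-bound. Upgrading to strict decay requires coupling the quantitative tail assumption \eqref{555}/\eqref{555555} with Potter's bounds on $\ell^\ast$ and Karamata's integral asymptotics for $g'$, and the bookkeeping of slowly varying factors differs qualitatively across the three cases (a), (b1), (c1).
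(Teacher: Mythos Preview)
Your argument for the first limit is correct and close in spirit to the paper's, though you replace the paper's Tauberian step (its Lemma~\ref{taub}) by the elementary splitting $\varphi(t)\le \mmp\{|\log(1-W)|>t/2\}+\exp(-e^{t/2})$; both work.

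For the second limit your route is correct but considerably more roundabout than the paper's. The paper integrates by parts \emph{directly}:
\[
\int_{[0,t]}g(y)\,{\rm d}(-\varphi(y))=g(0)\varphi(0)-g(t)\varphi(t)+\int_{[0,t]}\varphi(y)g'(y)\,{\rm d}y,
\]
and then applies l'H\^opital to $\int_{[0,t]}\varphi(y)g'(y)\,{\rm d}y/\sqrt{k(t)}$, where the derivative ratio is $2g'(t)\sqrt{k(t)}$ because the factor $\varphi(t)$ cancels against $k'(t)=\varphi(t)$. This reduces the problem immediately to showing $g'(t)\sqrt{k(t)}\sim{\rm const}\,g(t)\sqrt{k(t)}/t\to 0$, which is exactly where the first limit (and, in cases (b1),(c1), the hypotheses \eqref{555}/\eqref{555555}) enter. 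Your probabilistic representation, the subadditivity bound $g(a+b)\le g(a)+g(b)$, and the reduction to $\me g(|\log(1-W)|\wedge t)=\int_{[0,t]}\mmp\{|\log(1-W)|>y\}g'(y)\,{\rm d}y$ simply reproduce the same integral with $\varphi$ replaced by $\mmp\{|\log(1-W)|>\cdot\}$; since you also show these two are comparable, you end up at the same l'H\^opital computation after a detour.

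Two points to tidy up. First, you invoke concavity of $g$ and existence of $g'$, but the paper only fixes $g$ up to asymptotic equivalence and monotonicity; you should say explicitly that by the smooth variation theorem one may \emph{choose} $g$ to be smooth, concave, with $g(0)=0$ (this is legitimate since $g$ has regular variation index $1/\alpha\in(0,1)$). Second, your description of the ``main obstacle'' says the upgrade from $O$ to $o$ \emph{requires} the tail hypotheses \eqref{555}/\eqref{555555}; in fact in case~(a) ($\sigma^2<\infty$) no such hypothesis is available, and the little-$o$ follows from $\mmp\{|\log(1-W)|>y\}\to 0$ alone via l'H\^opital, exactly as in the paper.
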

\begin{proof}
{\sc Case $\sigma^2<\infty$}. The first relation in \eqref{rel} is
immediate:
$${g^2(t)\varphi^2(t)\over k(t)}={\rm const}\, {t\varphi^2(t)\over
k(t)}\leq {\rm const}\,\varphi(t) \ \to \ 0, \ \ t\to\infty.$$
Condition $\nu=\infty$ is equivalent to $\lit k(t)=\infty$.
Therefore if the integral $\int_{[0,\infty)}y^{1/2}{\rm
d}(-\varphi(y))$ converges the second relation in \eqref{rel}
holds trivially. Assume that $\lit \int_{[0,t]}y^{1/2}{\rm
d}(-\varphi(y))=\infty$. Integrating by parts, we have
$${1\over
\sqrt{k(t)}}\int_{[0,\,t]}y^{1/2}{\rm d}(-\varphi(y)) \ \sim \
{1\over 2\sqrt{k(t)}}\int_{[0,\,t]}\varphi(y)y^{-1/2}{\rm d}y, \ \
t\to\infty.$$ By l'H\^{o}pital rule,
\begin{equation*}
{1\over \sqrt{k(t)}}\int_{[1,\,t]}\varphi(y)y^{-1/2}{\rm d}y \
\sim \ 2\sqrt{k(t)/t} \ \to \ 0, \ \ t\to\infty,
\end{equation*}
which proves the second relation in \eqref{rel}.

\noindent {\sc Case when conditions \eqref{domain0} and
\eqref{555} hold}. Let $\eta$ be a random variable with
distribution such that
$$\mmp\{\eta\leq x\} \ \sim \ {1\over (\ell^\ast(-\log x))^2}, \ \ x\downarrow
0.$$ Then \eqref{555} is equivalent to
$$\underset{x\downarrow 0}{\lim}\,{\mmp\{1-W\leq x\}\over \mmp\{\eta\leq
x\}}=0.$$ By Lemma \ref{taub}, $$\lit {\psi(t)\over \me
e^{-t\eta}}=0.$$ Since, by Theorem 1.7.1' in \cite{BGT}, $\me
e^{-t\eta} \sim (\ell^\ast(\log t))^{-2}$, $t\to\infty$, we
conclude that
$$\lit \varphi(t)(\ell^\ast(t))^2=0.$$ Hence
$${g^2(t)\varphi^2(t)\over k(t)}\sim {\rm const}\, {t(\ell^\ast(t))^2\varphi^2(t)\over
k(t)}\leq {\rm const}\,\varphi(t)(\ell^\ast(t))^2 \ \to \ 0, \ \
t\to\infty.$$

If the integral $\int_{[0,\infty)}g(y){\rm d}(-\varphi(y))$
converges the second relation in \eqref{rel} holds trivially.
Assume that $\lit \int_{[0,t]}g(y){\rm d}(-\varphi(y))=\infty$.
According to Theorem 1.8.3 in \cite{BGT}, we can assume, without
loss of generality, that $g$ is differentiable. Then, $g^\prime(t)
\sim {\rm const}\, t^{-1/2}\ell^\ast(t)$, $t\to\infty$.
Integrating by parts, we have
$${1\over
\sqrt{k(t)}}\int_{[0,t]}g(y){\rm d}(-\varphi(y)) \ \sim \ {1\over
\sqrt{k(t)}}\int_{[1,t]}\varphi(y)g^\prime(y){\rm d}y, \ \
t\to\infty.$$ By l'H\^{o}pital rule,
\begin{equation}\label{3443}
{1\over \sqrt{k(t)}}\int_{[1,\,t]}\varphi(y)g^\prime(y){\rm d}y \
\sim \ 2g^\prime(t)\sqrt{k(t)} \ \sim \ {\rm const}\,{g(t)\over
t}\sqrt{k(t)}, \ \ t\to\infty.
\end{equation}
If $\varphi(t)\sim (\ell^\ast(t))^{-2}$, $t\to\infty$ then, by
Proposition 1.5.8 in \cite{BGT}, $\lit {g(t)\sqrt{k(t)}\over t}
=1$. Therefore, the right-hand side of \eqref{3443} goes to zero,
as $t\to\infty$, if condition \eqref{555} holds.

The case when conditions  \eqref{domain1} and \eqref{555555} hold
can be treated similarly, and we omit details.
\end{proof}

Let $(S_k^\ast)_{k\in\mn_0}$ be a zero-delayed random walk with
positive steps. Set $$N^\ast(x):=\inf\{k\in\mn_0: S_k^\ast>x\}, \
\ x\geq 0.$$ Lemma \ref{sg} is used in Section \ref{ma} for
investigating the asymptotics of moments.
\begin{lemma}\label{sg}
Suppose $\me S_1^\ast<\infty$, and the law of $S_1^\ast$ is
non-lattice.

\noindent (a) Let $r: [0,\infty)\to [0,\infty)$ be a nonincreasing
function such that
$$\lit \int_{[0,\,t]}r(y){\rm d}y=\infty.$$ Then
\begin{equation*}
\me \int_{[0,\,t]}r(t-z){\rm d}N^\ast(z) \ \sim \ (\me
S_1^\ast)^{-1}\int_{[0,\,t]}r(z){\rm d}z, \ \ t\to\infty.
\end{equation*}

\noindent (b) Let $r_1, r_2: [0,\infty)\to [0,\infty)$ be
nondecreasing functions such that $r_1(t)\geq r_2(t)$, $t\geq 0$,
and
\begin{equation}\label{sg1}
\lit \int_{[0,\,t]}\big(r_1(y)-r_2(y)\big){\rm d}y=\infty \ \
\text{and} \ \ \lit {r_1(t)+r_2(t) \over
\int_{[0,\,t]}\big(r_1(y)-r_2(y)\big){\rm d}y}=0.
\end{equation}
Then
\begin{equation*}
\me \int_{[0,\,t]}\big(r_1(t-z)-r_2(t-z)\big){\rm d}N^\ast(z) \
\sim \ (\me
S_1^\ast)^{-1}\int_{[0,\,t]}\big(r_1(z)-r_2(z)\big){\rm d}z, \ \
t\to\infty.
\end{equation*}
\end{lemma}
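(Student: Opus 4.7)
The plan is to derive both parts from Blackwell's renewal theorem: with $U(t):=\me N^*(t)$ and $\mu^*:=\me S_1^*$, the non-lattice assumption gives, for every $h>0$ and $\varepsilon>0$, a $T_0$ such that $|U(s+h)-U(s)-h/\mu^*|\le\varepsilon$ for all $s\ge T_0$. I would partition the integration variable $z\in[0,t]$ into bins $(kh,(k+1)h]$ and on each bin use monotonicity of the integrand to sandwich the integral by a product (monotone value)$\,\cdot\,\Delta_k U$, where $\Delta_k U:=U((k+1)h)-U(kh)\approx h/\mu^*$ by Blackwell.

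For part (a), $r$ nonincreasing yields
\begin{equation*}
r(t-kh)\,\Delta_k U\;\le\;\int_{(kh,(k+1)h]}r(t-z)\,{\rm d}U(z)\;\le\; r(t-(k+1)h)\,\Delta_k U.
\end{equation*}
Summing over the bins with $kh\ge T_0$ produces upper and lower Riemann sums for $(1/\mu^*)\int_{[0,t]}r$, up to a relative Blackwell error of order $\varepsilon/h$. The $O(1)$ bins with $kh<T_0$ contribute at most $r(0)\,U(T_0)=O(1)$, which is negligible since $\int_{[0,t]}r\to\infty$. Letting $t\to\infty$ with $h,\varepsilon$ fixed and then $\varepsilon\downarrow 0$ gives the desired ratio $1/\mu^*$.

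For part (b), since $r_1,r_2$ are nondecreasing, $r_i(t-z)$ is nonincreasing in $z$, so
\begin{equation*}
r_1(t-(k+1)h)-r_2(t-kh)\;\le\;(r_1-r_2)(t-z)\;\le\;r_1(t-kh)-r_2(t-(k+1)h).
\end{equation*}
The critical step is to rewrite the upper bound as
\begin{equation*}
\sum_k(r_1-r_2)(t-kh)\,\Delta_k U\;+\;\sum_k\bigl[r_2(t-kh)-r_2(t-(k+1)h)\bigr]\Delta_k U,
\end{equation*}
so that the Blackwell $\varepsilon$-error multiplies $\sum_k(r_1-r_2)(t-kh)\approx(1/h)\int_{[0,t]}(r_1-r_2)$ rather than the much larger $\int r_1+\int r_2$; the telescoping second sum is $O(h\,r_2(t))$. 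The upper and lower sandwich bounds themselves differ by $O(h(r_1(t)+r_2(t)))$ (again telescoping), and the bins near $z=0$ (where $r_i(t-z)\le r_i(t)$) yield a boundary contribution $O((r_1(t)+r_2(t))\,U(T_0))$. By \eqref{sg1} every such defect is $o(\int_{[0,t]}(r_1-r_2))$, and the ratio argument from part (a) closes the proof.

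The main technical subtlety I foresee is exactly the decomposition in part (b): one must pair Blackwell's per-bin error with $(r_1-r_2)$ rather than with $r_1$ or $r_2$ individually, since the naive subtraction $\int r_1(t-z)\,{\rm d}U-\int r_2(t-z)\,{\rm d}U$ produces an error of order $(\varepsilon/h)(\int r_1+\int r_2)$ that can drown out $\int(r_1-r_2)$ when $r_1\approx r_2$. The telescoping rearrangement above is what isolates the correction as a term of order $h(r_1+r_2)(t)$ and thereby allows \eqref{sg1} to absorb it.
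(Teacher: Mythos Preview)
Your argument is sound. The paper does not actually prove this lemma: it records part (a) as a special case of Theorem~4 in Sgibnev~\cite{Sgib} and states that part (b) follows by the same method with minor changes, giving no details. Your Blackwell-plus-binning approach is a standard direct route and would serve as a self-contained proof where the paper offers none.

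The key observation you isolate for part (b)---rewriting the upper envelope as $\sum_k(r_1-r_2)(t-kh)\,\Delta_kU$ plus the telescoping correction $\sum_k\bigl[r_2(t-kh)-r_2(t-(k+1)h)\bigr]\Delta_kU$---is precisely what is needed: the Blackwell error $\varepsilon$ then multiplies $\sum(r_1-r_2)\approx h^{-1}\int(r_1-r_2)$ rather than $\sum r_1$ or $\sum r_2$ separately, while the telescoping remainder and the boundary bins near $z=0$ are both $O\bigl(r_1(t)+r_2(t)\bigr)$ and hence $o\bigl(\int(r_1-r_2)\bigr)$ by the second condition in \eqref{sg1}. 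One point worth making explicit in a full write-up: the Riemann-sum discrepancy $\bigl|h\sum_k(r_1-r_2)(t-kh)-\int_0^t(r_1-r_2)\bigr|$ is not immediately controlled since $r_1-r_2$ need not be monotone, but splitting into the two monotone pieces bounds it by $h\bigl(r_1(t)+r_2(t)\bigr)$, which \eqref{sg1} again absorbs. With that in hand, $h$ may stay fixed throughout and only $\varepsilon\downarrow 0$ is required, exactly as you indicate.
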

\begin{rem}
The conclusion of Lemma \ref{sg}(b) is in force whenever $r_1$ is
a nondecreasing function of {\it subexponential growth} satisfying
$\int_{[0,\,\infty)} r_1(y){\rm d}y=\infty$ and $r_2\equiv 0$.

Let us further note that the second condition in \eqref{sg1}
cannot be omitted. Indeed, assuming that $r_1(t)=e^t$ and
$r_2\equiv 0$ we infer $\me\int_{[0,\,t]}r_1(t-y){\rm
d}N^\ast(y)\sim (1-\me e^{-S^\ast_1})^{-1}e^t$, whereas $(\me
S_1^\ast)^{-1}\int_{[0,\,t]}r_1(y){\rm d}y\sim (\me
S_1^\ast)^{-1}e^t$.
\end{rem}
Part (a) of Lemma \ref{sg} is a fragment of Theorem 4 in
\cite{Sgib}. The proof of part (b) requires only minor
modifications and is thus omitted.

\begin{lemma}\label{impo7}
Let $0\leq a<b<\infty$. Assume that $X_t(\cdot) \Rightarrow
X(\cdot)$, as $t\to\infty$, in $D[a, b]$ in the $M_1$ topology.
Assume also that, as $t\to\infty$, $\mu_t$ converges weakly to
$\mu$ on $[a,b]$, where $(\mu_t)$ is a family of Radon measures,
and the limiting measure $\mu$ is absolutely continuous with
respect to the Lebesgue measure. Then
$$\int_{[a,b]}X_t(\cdot)\mu_t({\rm d}y) \ \dod \
\int_{[a,b]}X(\cdot)\mu({\rm d}y), \ \ t\to\infty.$$
\end{lemma}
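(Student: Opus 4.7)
The plan is to apply the continuous mapping theorem to the functional
$\Phi(f,\nu):=\int_{[a,b]} f(y)\,\nu({\rm d}y)$
on $D[a,b]\times\mathcal{M}([a,b])$, where $D[a,b]$ carries the $M_1$ topology and $\mathcal{M}([a,b])$ the weak topology on finite Radon measures. Because $\mu_t$ is deterministic, the joint convergence $(X_t,\mu_t)\Rightarrow(X,\mu)$ in the product space is immediate from the two marginal convergences. The task therefore reduces to the deterministic claim that $\Phi$ is continuous at every pair $(X,\mu)$ with $X\in D[a,b]$ and $\mu$ absolutely continuous: for such pairs, whenever $f_n\to f$ in $M_1$ and $\nu_n\to\nu$ weakly, one has $\int f_n\,{\rm d}\nu_n\to\int f\,{\rm d}\nu$.

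Split
$$\int f_n\,{\rm d}\nu_n-\int f\,{\rm d}\nu\;=\;\int(f_n-f)\,{\rm d}\nu_n\;+\;\int f\,{\rm d}(\nu_n-\nu).$$
For the second term, $f$ is c\`adl\`ag and therefore has at most countably many discontinuities, forming a $\nu$-null set by absolute continuity of $\nu$. The standard extension of the Portmanteau theorem to bounded Borel functions that are $\nu$-a.e.\ continuous yields $\int f\,{\rm d}\nu_n\to\int f\,{\rm d}\nu$.

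For the first term I would use a step-function approximation tailored to the $M_1$ topology. Given $\varepsilon>0$, choose a partition $a=y_0<y_1<\cdots<y_N=b$ whose nodes $y_i$ are continuity points of $f$ and whose sub-intervals satisfy $\sup_{y,y'\in[y_{i-1},y_i]}|f(y)-f(y')|\leq\varepsilon$ (possible since a c\`adl\`ag function has only finitely many jumps above any positive threshold). Set $\phi(y):=f(y_{i-1})$ and $\phi_n(y):=f_n(y_{i-1})$ for $y\in[y_{i-1},y_i)$, extended at $b$ by $f(b)$ and $f_n(b)$. By construction $\|\phi-f\|_\infty\leq\varepsilon$. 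Since $M_1$ convergence entails pointwise convergence at continuity points of $f$, $\phi_n\to\phi$ uniformly on $[a,b]$. The third ingredient is that $M_1$ convergence on $D[a,b]$ implies that $\sup_{[c,d]}f_n$ and $\inf_{[c,d]}f_n$ converge to the corresponding quantities of $f$ whenever $c<d$ are continuity points of $f$; applied to each sub-interval this yields $\|f_n-\phi_n\|_\infty\leq 2\varepsilon$ for $n$ large. Combining these three bounds with the uniform total-mass bound $\sup_n\nu_n([a,b])<\infty$ produced by weak convergence, one obtains $\limsup_n\bigl|\int(f_n-f)\,{\rm d}\nu_n\bigr|\leq C\varepsilon$, and the desired conclusion follows on sending $\varepsilon\downarrow 0$.

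The main obstacle is the oscillation statement invoked in the last step: one must verify, or quote (for instance from Whitt's monograph on stochastic-process limits), that $M_1$ convergence in $D[a,b]$ preserves the supremum and infimum on sub-intervals whose endpoints are continuity points of the limit. An alternative would be to pass to uniformly convergent monotone-continuous parametric representations of the completed graphs of $f_n$ and $f$ and reformulate the integral via pushforward measures on the parameter interval $[0,1]$, but this route appears more technical than the step-function approach above.
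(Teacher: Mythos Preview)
Your reduction to a deterministic continuity statement via the continuous mapping theorem, together with the observation that the deterministic $\mu_t$ makes the joint convergence automatic, matches the paper's argument exactly. The gap is in how you prove that deterministic statement. You claim a partition $a=y_0<\cdots<y_N=b$ exists with all nodes $y_i$ continuity points of $f$ \emph{and} oscillation of $f$ at most $\varepsilon$ on each closed subinterval $[y_{i-1},y_i]$. This is impossible whenever $f$ has a jump of size larger than $\varepsilon$: since the nodes are continuity points, the jump sits strictly inside some subinterval and forces the oscillation there to exceed $\varepsilon$. Both claimed uniform bounds $\|\phi-f\|_\infty\le\varepsilon$ and $\|f_n-\phi_n\|_\infty\le 2\varepsilon$ then fail. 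In the paper the lemma is applied with $X$ an $\alpha$-stable L\'evy process, which has jumps, so the gap is not merely cosmetic. The obstacle you single out (preservation of sup/inf under $M_1$) is indeed available in Whitt; the real obstacle is upstream of it, in the partition construction.

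The paper bypasses the step-function route entirely. It notes that $M_1$ convergence gives local uniform convergence at every continuity point of the limit (Whitt, Lemma~12.5.1), so the set $E=\{x:\exists\,x_t\to x,\ h_t(x_t)\not\to h(x)\}$ lies inside the at most countable discontinuity set of $h$ and hence has $\mu$-measure zero by absolute continuity; the convergence $\int h_t\,{\rm d}\mu_t\to\int h\,{\rm d}\mu$ then follows from a general lemma of Brozius on exactly this situation. To salvage your approach along its own lines you would have to excise small neighbourhoods of the finitely many jumps of $f$ of size exceeding $\varepsilon$, control their $\nu_n$-mass using weak convergence to the absolutely continuous $\mu$, and run the uniform approximation only on the complement---which is essentially a hands-on proof of the lemma the paper cites.
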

\begin{proof}
It suffices to prove that
\begin{equation}\label{25}
\lit \int_{[a,b]}h_t(y)\mu_t({\rm d}y)= \int_{[a,b]}h(y)\mu({\rm
d}y),
\end{equation}
whenever $\lit h_t(y)=h(y)$ in $D[a,b]$ in the $M_1$ topology, for
the desired result then follows by the continuous mapping theorem.

\noindent Since $h\in D[a,b]$ the set $D_h$ of its discontinuities
is at most countable. By Lemma 12.5.1 in \cite{Whitt2},
convergence in the $M_1$ topology implies local uniform
convergence at all continuity points of the limit. Hence $E:=\{x:
\text{there exists} \ x_t \ \text{such that} \ \lit x_t= x,
\text{but} \ \lit h_t(x_t)\neq h(x)$$\}\subseteq D_h$, and we
conclude that $\mu(E)=0$. Now \eqref{25} follows from Lemma 2.1 in
\cite{broz}.
\end{proof}

\noindent {\bf Acknowledgement}. I thank the referee for a very
careful reading and many useful suggestions that led to
substantial improvements of the paper. The referee provided me
with an impressive list of my inaccuracies and oversights which
allowed me to correct them in the final version. Further thanks
are due to the Associate Editor for several valuable comments.
Finally I am grateful to Alexander Marynych for careful reading
and pointing out several oversights, in particular, in the proof
of Lemma \ref{impo7}. The present proof of Lemma \ref{6} is due to
him.

\end{document}